\documentclass[12pt]{amsart}
\usepackage[a4paper,top=1.5in,bottom=1in,left=1in,right=1in,marginparwidth=1.75cm]{geometry}
\usepackage{amsmath,amssymb,mathtools}
\usepackage{enumitem,makecell,caption,longtable,wrapfig}
\usepackage[colorlinks=true, allcolors=blue]{hyperref}
\usepackage{tikz}
\usetikzlibrary{tqft}

\newcommand{\mc}[1]{\mathcal{#1}}
\newcommand{\mbf}[1]{\mathbf{#1}}
\newcommand{\mr}[1]{\mathrm{#1}}

\newcommand{\gen}[1]{\langle {#1} \rangle}

\newcommand{\C}{\mathbb{C}}
\newcommand{\Zen}[1]{\mathbf{Z}(#1)}
\newcommand{\Cen}[2]{\mathbf{C}_{#1}(#2)}
\newcommand{\Norm}[2]{\mathbf{N}_{#1}(#2)}
\newcommand{\nrm}{\trianglelefteq}
\newcommand{\Irr}[1]{\mathrm{Irr}(#1)}
\newcommand{\IBr}[1]{\mathrm{IBr}(#1)}

\newcommand{\til}[1]{\widetilde{#1}}
\newcommand{\q}[1]{\overline{#1}}

\theoremstyle{definition}

\newtheorem{theorem}{Theorem}[section]

\newtheorem{lemma}[theorem]{Lemma}

\newtheorem{remark}[theorem]{Remark}

\usepackage{amsfonts, amsmath, amssymb, mathrsfs, bm, latexsym, stmaryrd, array, mathtools, bold-extra}
\usepackage{etaremune}
\usepackage{enumitem}
\usepackage[pagewise]{lineno}  \nolinenumbers 

\title[Invariants of finite groups arising in TQFT]{On the invariants of finite groups arising in a topological quantum field theory}

\author{Christopher A. Schroeder and Hung P. Tong-Viet}
\date{\today}
\keywords{character degrees; conjugacy classes; invariants; commuting probability; TQFT}
\subjclass{Primary Classification 20C15, 20E45, Secondary Classification 20C20, 20D10, 20D15, 20D20}

\begin{document}

\begin{abstract}


In this paper, we investigate structural properties of finite groups that are detected by certain group invariants arising from Dijkgraaf–Witten theory, a topological quantum field theory, in one space and one time dimension. In this setting, each finite group $G$ determines a family of numerical invariants associated with closed orientable surfaces, expressed in terms of the degrees of the complex irreducible characters of $G$. These invariants can be viewed as natural extensions of the commuting probability 
$d(G)$, which measures the probability that two randomly chosen elements of $G$ commute and has been extensively studied in the literature. By analyzing these higher-genus analogues, we establish new quantitative criteria relating the values of these invariants to key structural features of finite groups, such as commutativity, nilpotency, supersolvability and solvability. Our results generalize several classical theorems concerning the commuting probability, thereby linking ideas from finite group theory and topological quantum field theory.
\end{abstract}

\maketitle

\section{Introduction}
There is a long history of using invariants to deduce structural properties of finite groups. These invariants are often constructed by counting objects associated with the group, such as elements, conjugacy classes or irreducible characters. In this paper, we take a different approach. Given the remarkable compatibility between mathematics and physics, it is reasonable to expect that group invariants arising naturally in physics are useful for characterizing group structure. Here, we consider Dijkgraaf--Witten theory, a topological quantum field theory (TQFT), in one space dimension and one time dimension. In short, such a TQFT determines a commutative Frobenius algebra 
that gives rise to invariants of surfaces. If we assume that this algebra is the center $\mbf{Z}(\C G)$ of the complex group algebra of a finite group $G$, then the 
invariant of a \mbox{genus-$h$} surface takes the form
\begin{align*}
    \mc{Q}_h(G) = \sum_{\chi \in \mr{Irr}(G)} \Big(\frac{|G|}{\chi(1)}\Big)^{2h-2},
\end{align*}
where $\Irr{G}$ is the collection of complex irreducible characters of $G$. Note that $\mc{Q}_0(G) = 1/|G|$ and $\mc{Q}_1(G) = k(G)$, where $k(G)$ is the number of conjugacy classes of $G$ (equivalently, the number $|\Irr{G}|$ of irreducible complex characters). 
If we consider the invariants $\mc{Q}_h(G)$ as arising from a sequence of increasingly complicated experiments, we might imagine that we first measure $\mc{Q}_0(G)$, then $\mc{Q}_1(G)$, and so on. With this in mind, it is natural to consider the following scaled invariants, which, as we will show, are very well-behaved:
\begin{align*}
    q_h(G) := 
    \frac{1}{|G|} \sum_{\chi \in \Irr{G}} \Big(\frac{1}{\chi(1)}\Big)^{2h-2}.
\end{align*}
Now note that $q_0(G)=1$ and $q_1(G)=k(G)/|G|$, where the latter is the so-called commuting probability $d(G)$, which was first considered by Joseph \cite{J69,J77}, then by Gustafson~\cite{G73} and many others. Viewed in this way, the ``quantum invariants'' $q_h(G)$ are generalizations of the commuting probability, for which the following structure criteria are known:
\begin{enumerate}[nolistsep,label=(\alph*)]
    \item If $d(G) > d(\mr{D}_8) = 5/8$, then $G$ is abelian (Gustafson \cite{G73}).
    \item If $d(G)>d(\mr{S}_3)=1/2$, then $G$ is nilpotent (Lescot \cite{Le87}).
    \item If $d(G) > d(\textrm{A}_4) = 1/3$, then $G$ is supersolvable (Barry, MacHale, N\'{\i}~Sh\'{e} \cite{BMN}).
    \item If $d(G) > d(\mr{A}_5)=1/12$, then $G$ is solvable (Dixon \cite{D73}).
\end{enumerate}

In this paper, we investigate which structural properties of finite groups are witnessed by the invariants $q_h(G)$. Our main results establish broad generalizations of the classical structure criteria above to arbitrary genus $h\ge 1$. The bounds in our first theorem are clearly best possible.


\begin{theorem}\label{t:qh-criteria}
Let $G$ be a finite group, and let $h$ be a positive integer.
\begin{enumerate}[nolistsep,label=(\alph*)]
    \item If $q_h(G) > q_h(\mr{D}_8)$, then $G$ is abelian.
    \item If $q_h(G)>q_h(\mr{S}_3)$, then $G$ is nilpotent.
    \item If $q_h(G) > q_h(\textrm{A}_4)$, then $G$ is supersolvable.
    \item If $q_h(G) > q_h(\mr{A}_5)$, then $G$ is solvable.
\end{enumerate}
\end{theorem}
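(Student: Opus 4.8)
The plan relies on two features of $q_h$. The first is an elementary estimate: for $h\ge1$ we have $2-2h\le0$, so $\chi(1)^{2-2h}\le d^{-2h}\chi(1)^2$ whenever $\chi(1)\ge d\ge1$; summing over the nonlinear $\chi\in\Irr{G}$ (all of degree $\ge2$) and using $\sum_{\chi\in\Irr{G}}\chi(1)^2=|G|$ together with $|G:G'|=\#\{\chi\in\Irr{G}:\chi(1)=1\}$ gives the crude bound
\[
q_h(G)\ \le\ \frac{1}{|G'|}\bigl(1-2^{-2h}\bigr)+2^{-2h},
\]
with $2^{-2h}$ replaced by $d^{-2h}$ if every nonlinear irreducible character of $G$ has degree $\ge d$. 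The right-hand side decreases in $|G'|$, and a direct computation shows it is attained by $\mr{D}_8$ (where $|G'|=2$) and by $\mr{S}_3$ (where $|G'|=3$). This already yields (a) and (b): if $G$ is non-abelian then $|G'|\ge2$, so $q_h(G)\le q_h(\mr{D}_8)$; if $G$ is non-nilpotent then $|G'|\ne2$ (a derived subgroup of order $2$ is centralized by $G$, since $\mathrm{Aut}(C_2)=1$, which forces nilpotency), so $|G'|\ge3$ and $q_h(G)\le q_h(\mr{S}_3)$. These two arguments are uniform in $h\ge1$ and recover the theorems of Gustafson and Lescot at $h=1$.

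The second feature is monotonicity under normal subgroups: for $N\nrm G$ one has $q_h(G)\le q_h(G/N)$ and $q_h(G)\le q_h(N)$, and also $q_h(G_1\times G_2)=q_h(G_1)q_h(G_2)$. The quotient inequality is immediate once one recalls that $q_h(G)$ equals the probability that $\prod_{i=1}^h[a_i,b_i]=1$ for a uniformly random $(a_1,b_1,\dots,a_h,b_h)\in G^{2h}$ (Frobenius--Mednykh): reduction modulo $N$ maps solutions to solutions with every fibre of size $|N|^{2h}$. The subgroup inequality $q_h(G)\le q_h(N)$ I would prove by Clifford theory---partition $\Irr{G}$ by the $G$-orbits on $\Irr{N}$, write $\chi(1)=|G:I_G(\theta)|\,e_\psi\,\theta(1)$ for $\chi$ over $\theta$, and use $\sum_\psi e_\psi^2=|I_G(\theta):N|$ and $e_\psi^{2-2h}\le e_\psi^2$ to get the bound orbit by orbit. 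Alongside these I record the group theory I need: a non-abelian simple group has no irreducible character of degree $\le2$ (a degree-$2$ representation would embed it in $\mathrm{PGL}_2(\C)$, forcing it to be $\mr{A}_5$, which has no faithful $2$-dimensional representation); $|S|\ge168$ for every non-abelian simple $S\ne\mr{A}_5$; a group with cyclic derived subgroup is supersolvable (so a non-supersolvable group has $|G'|\ge4$, with $G'\cong C_2\times C_2$ if $|G'|=4$); and an extension of a supersolvable group by a central subgroup, or by a normal subgroup of prime order, is supersolvable.

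For (d) (at $h=1$ this is Dixon's theorem, so take $h\ge2$), let $G$ be a counterexample of least order. For any minimal normal $N$ we have $q_h(G/N)\ge q_h(G)>q_h(\mr{A}_5)$, so if $N$ were abelian, $G/N$ would be a smaller counterexample; hence every minimal normal subgroup of $G$ is non-abelian, $N\cong S^k$ with $S$ simple, and $q_h(G)\le q_h(N)=q_h(S)^k\le q_h(S)\le q_h(\mr{A}_5)$---the last step trivial if $S=\mr{A}_5$, and otherwise the crude bound with $d=3$ and $|S|\ge168$ gives $q_h(S)\le\tfrac1{168}(1-3^{-2h})+3^{-2h}\le q_h(\mr{A}_5)$ for $h\ge2$---a contradiction. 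For (c) (at $h=1$, Barry--MacHale--N\'{\i}~Sh\'{e}; take $h\ge2$), let $G$ be a least counterexample; as in (d), every minimal normal $N$ has $G/N$ supersolvable. Fix such an $N$. If $N$ is non-abelian then $G$ is non-solvable, so by (d) $q_h(G)\le q_h(\mr{A}_5)<q_h(\mr{A}_4)$ (indeed $q_h(\mr{A}_5)\le\tfrac1{12}<\tfrac14<q_h(\mr{A}_4)$), a contradiction. So $N$ is elementary abelian of order $p^r$; then $r\ge2$ ($r=1$ would make $G$ supersolvable) and $N\le G'$ (otherwise $N\cap G'=1$ by minimality, so $[G,N]\le N\cap G'=1$, $N\le\Zen{G}$, and $G$ is supersolvable), whence $|G'|\ge p^r\ge4$. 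If $p^r\ge8$, the crude bound with $|G'|\ge8$ gives $q_h(G)\le\tfrac14<q_h(\mr{A}_4)$ for $h\ge2$. If $p^r=4$, then $G$ acts irreducibly on $N\cong C_2\times C_2$, hence transitively on the three nontrivial characters of $N$, so every $\chi\in\Irr{G}$ not inflated from $G/N$ has $3\mid\chi(1)$; writing $\Irr{G}=\Irr{G/N}\sqcup\Lambda$ (with $3\mid\chi(1)$ on $\Lambda$), and using $\sum_{\chi\in\Irr{G/N}}\chi(1)^2=|G|/4$, $\chi(1)^{2-2h}\le3^{-2h}\chi(1)^2$ on $\Lambda$, and $q_h(G/N)\le1$, one gets exactly $q_h(G)\le\tfrac14+\tfrac{3^{1-2h}}{4}=q_h(\mr{A}_4)$, a contradiction.

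The main obstacle is the solvable, non-supersolvable case with $|G'|=4$. The crude $|G'|$-bound is sharp for $\mr{D}_8$ and $\mr{S}_3$, but strictly lossy for $\mr{A}_4$, whose exceptional character has degree $3$ rather than $2$; consequently, for $h\ge2$ it just fails to reach $q_h(\mr{A}_4)$ precisely when $|G'|=4$, and one is forced into the explicit Clifford analysis of the $C_2\times C_2$-module above, where the delicate point is that the resulting estimate must land exactly on $q_h(\mr{A}_4)$ (not merely below it) for the theorem to be sharp. A minor nuisance is that the crude bounds degrade as $h\to1$, which is why the classical results are invoked at $h=1$ in (c) and (d); (a) and (b) require no such split.
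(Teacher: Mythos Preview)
Your argument is correct and takes a genuinely different route from the paper's. The paper works entirely through the \emph{subgroup} monotonicity $q_h(H)\ge q_h(G)$ for $H\le G$ (its Lemma~\ref{l:mono}), which forces a minimal counterexample to be minimal-non-$X$; it then appeals to the structure theory of minimal non-nilpotent groups for (b), to Doerk's classification of minimal non-supersolvable groups for (c), and to the classification of perfect groups with $d(G)>1/20$ for (d). Your proof instead exploits \emph{quotient} monotonicity $q_h(G)\le q_h(G/N)$, obtained from the Frobenius--Mednykh counting formula---a tool the paper never invokes (indeed, the paper remarks that it could not prove the stronger product inequality $q_h(G)\le q_h(N)q_h(G/N)$, and does not isolate the weaker quotient inequality you use). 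This lets you push a minimal counterexample down to a group whose socle you can analyse directly: a non-abelian simple factor $S$ in (d), where the crude bound with $d=3$ and $|S|\ge168$ suffices, or an elementary abelian minimal normal $N$ in (c), where only the sharp case $N\cong C_2\times C_2$ needs the explicit Clifford estimate. For (a) and (b) your single-line derived-subgroup bound is strictly simpler than the paper's case analyses. What your approach buys is uniformity and independence from the Doerk and $d(G)>1/20$ classifications; what the paper's approach buys is that subgroup monotonicity holds for \emph{all} subgroups, not just normal ones, which it later needs in the proof of Theorem~\ref{t:pclosed}.
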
 


Guralnick and Robinson showed in \cite[Lemma~2(x)]{G06} that if $p$ is a prime and \mbox{$d(G)>1/p$,} then $G$ has a normal Sylow $p$-subgroup. Our next result generalizes this $p$-closed criterion to all genera.

\begin{theorem}\label{t:pclosed}
Let $G$ be a finite group, let $p$ be a prime, and let $h$ be a positive integer. If $q_h(G) > \gamma(h,p)$, where $\gamma(h,p)=(1+1/p^{2h-1})/(p+1)$, then~$G$ has a normal Sylow $p$-subgroup.
\end{theorem}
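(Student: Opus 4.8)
The plan is to follow the strategy behind Guralnick and Robinson's $h=1$ result and reduce matters to a lower bound on $|G'|$. First I would prove the structural fact: \emph{if $|G'|\le p$, then $G$ has a normal Sylow $p$-subgroup}. The key point is that a finite group $X$ with $|X|\le p$ satisfies $p\nmid|\operatorname{Aut}(X)|$, since an automorphism of order $p$ would have fixed-point subgroup of index a multiple of $p$, forcing $|X|>p$. Applying this with $X=G'$ gives $p\nmid[G:\Cen{G}{G'}]$, so a Sylow $p$-subgroup $P$ of $G$ lies in $\Cen{G}{G'}$; and since $[\Cen{G}{G'},\Cen{G}{G'}]\le G'\cap\Cen{G}{G'}=\Zen{G'}\le\Zen{\Cen{G}{G'}}$, the subgroup $\Cen{G}{G'}$ is nilpotent of class at most $2$, hence, as $[G:\Cen{G}{G'}]$ is prime to $p$, $P$ is its unique Sylow $p$-subgroup and in particular characteristic in it; as $\Cen{G}{G'}\nrm G$, we get $P\nrm G$. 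Consequently, a group without a normal Sylow $p$-subgroup satisfies $|G'|\ge p+1$.

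Next I would split $q_h(G)$ into its linear and nonlinear parts. Writing $d_2$ for the smallest degree of a nonlinear irreducible character of $G$ and using $\sum_{\chi(1)>1}\chi(1)^2=|G|-[G:G']$ together with $\chi(1)^{2-2h}\le d_2^{-2h}\chi(1)^2$ for $\chi(1)>1$,
\begin{align*}
  q_h(G) &= \frac{1}{|G'|}+\frac{1}{|G|}\sum_{\substack{\chi\in\Irr{G}\\ \chi(1)>1}}\chi(1)^{2-2h}\\
  &\le \frac{1}{|G'|}+d_2^{-2h}\Bigl(1-\frac{1}{|G'|}\Bigr).
\end{align*}
The right-hand side decreases as $|G'|$ increases, so once $|G'|\ge p+1$ and $d_2\ge p$ it is at most $p^{-2h}+(1-p^{-2h})/(p+1)=(1+p^{1-2h})/(p+1)=\beta(h,p)/(p+1)$. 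This already proves Theorem~\ref{t:pclosed} for every group without a normal Sylow $p$-subgroup all of whose nonlinear irreducible characters have degree $\ge p$ — in particular for $p=2$ — and equality forces $|G'|=p+1$ with all nonlinear degrees equal to $p$, as for $\mr{S}_3$, $\mr{A}_4$, or $\mathrm{C}_q^k\rtimes\mathrm{C}_p$ whenever $p+1=q^k$.

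The obstacle is the complementary case, where $G$ has an irreducible character of degree $d$ with $2\le d<p$ (so $p\ge 3$): here $|G'|\ge p+1$ no longer suffices, and one must show that small-degree characters force $|G'|$, and the degree distribution, to be much more favourable. I would pass to a counterexample $G$ of minimal order. The invariants $q_h$ are monotone under quotients — $q_h(G)\le|N|^{2-2h}q_h(G/N)\le q_h(G/N)$ for $N\nrm G$ — which follows from Mednykh's formula $|\operatorname{Hom}(\pi_1(\Sigma_h),G)|=|G|^{4h-2}q_h(G)$ and the fact that each homomorphism $\pi_1(\Sigma_h)\to G/N$ has at most $|N|^{2h}$ lifts to $G$ (the genus-$h$ surface group needs $2h$ generators, and $2h\le 4h-2$). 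By minimality every proper quotient of $G$ is then $p$-closed; since a subdirect product of $p$-closed groups is $p$-closed, $G$ has a unique minimal normal subgroup $N$, and since $G/N$ is $p$-closed while $G$ is not, $N$ cannot be a $p$-group (a normal Sylow $p$-subgroup of $G/N$ would pull back to one of $G$).

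Finally one analyses this configuration, using the auxiliary estimate $q_h(G)\le(1-4^{1-h})|G'|^{-1}+4^{1-h}d(G)$ (obtained by bounding each nonlinear $\chi(1)^{2-2h}$ by $2^{2-2h}$). If $N$ is non-abelian, then $G$ is non-solvable and $q_h(G)\le q_h(\mr{A}_5)$ by Theorem~\ref{t:qh-criteria}(d); combined with the above estimate and $d(G)\le 1/p$ (Guralnick–Robinson), one checks $q_h(G)\le\beta(h,p)/(p+1)$, the only subtlety being that a non-solvable group without a normal Sylow $p$-subgroup must be large relative to $p$, which pushes $|G'|$ well beyond $p+1$. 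If $N=\mathrm{C}_q^k$ with $q\ne p$, then $PN\nrm G$ for $P\in\mathrm{Syl}_p(G)$, the action of $P$ on $N$ is nontrivial, and one splits $\Irr{G}$ over the $G$-orbits on $\Irr{N}$: the characters trivial on $N$ contribute $q_h(G/N)/|N|$, to be controlled by applying the already-proved bounds to the $p$-closed group $G/N$, while the characters over a nontrivial orbit of $\theta\in\Irr{N}$ have degree divisible by $[G:I_G(\theta)]>1$ and so contribute little. I expect the real work — and the source of the exact constant $\beta(h,p)/(p+1)$ — to lie in making this last estimate precise, i.e.\ in quantifying how the size of $N$ (hence of $|G'|$) is forced to compensate for the irreducible characters of degree below $p$.
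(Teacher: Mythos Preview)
Your outline is genuinely incomplete, as you yourself acknowledge: the case $2\le d_2<p$ (so $p\ge 3$) is where all the content lies, and you have only sketched a reduction, not carried it through. The paper's proof shows why this case is delicate: using \emph{subgroup} monotonicity (Lemma~\ref{l:mono}) rather than your quotient monotonicity, a minimal counterexample has every proper subgroup $p$-closed, and a nine-claim structural analysis then forces $G/O_p(G)\cong C_2^m\rtimes C_p$ with $p=2^m-1$ a Mersenne prime; ruling these out needs the Feit--Thompson bound $d_2\ge(p-1)/2$ for faithful representations, together with Theorems~\ref{t:qh-criteria}(b) and~(c) to dispose of $p\in\{2,3\}$. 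Your Clifford-theoretic sketch over the $G$-orbits on $\Irr N$ does not obviously reach this precision, and your non-abelian-$N$ case leans on ``a non-solvable group without a normal Sylow $p$-subgroup must be large relative to $p$'', which you have not made quantitative (note $q_h(\mr{A}_5)\le\beta(h,p)/(p+1)$ can fail for $h=1$ and $p\ge 13$, so something further really is needed).

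There is also a concrete error in your quotient-monotonicity step. Mednykh's formula is $|\operatorname{Hom}(\pi_1(\Sigma_h),G)|=|G|^{2h}q_h(G)$, not $|G|^{4h-2}q_h(G)$; with your exponent the lifting bound gives $q_h(G)\le|N|^{2-2h}q_h(G/N)$, which is false already for $G=\mr{D}_8$, $N=\Zen{G}$ (then $q_2(G)=17/32$ but $|N|^{-2}q_2(G/N)=1/4$). With the correct exponent one gets only the weaker $q_h(G)\le q_h(G/N)$, which is all you actually use and which is true --- a pleasant fact the paper does not record. So your reduction to a unique minimal normal subgroup is legitimate, but it is a different and a priori less restrictive configuration than the paper's ``all proper subgroups $p$-closed'', and you have not shown how to finish from it.
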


This criterion is the best possible. For example, if $p=2^f-1$ is a Mersenne prime, then the Frobenius group $G=(C_2)^f \rtimes C_p$ satisfies $q_h(G)=\gamma(h,p)$ and does not have a normal Sylow $p$-subgroup. Although Theorem~\ref{t:pclosed} implies Theorem~\ref{t:qh-criteria}(b) and much of part (c), we will in fact use the latter to prove this $p$-closed criterion.

Given a prime $p$, it is also natural to consider a $p$-local version of the invariants $q_h(G)$ by summing over irreducible $p$-Brauer characters instead of irreducible complex characters. We define
\begin{align*}
    q_{h,p'}(G) = \frac{1}{|G|_{p'}} \sum_{\varphi \in \IBr{G}} \frac{1}{\varphi(1)^{2h-2}}.
\end{align*}
Note that $q_{1,p'}(G)=k_{p'}(G)/|G|_{p'}$, where $k_{p'}(G)$ is the number of irreducible $p$-Brauer characters (equivalently, the number of $p$-regular conjugacy classes). This invariant is denoted by $d_{p'}(G)$ in the literature, and it was shown in \cite[Theorem~1.1]{S24} that if $p$ is an odd prime and \mbox{$d_{p'}(G) > 1/(p-1)$}, then $G$ is $p$-solvable. This result is best possible for $p>3$ since \mbox{$d_{p'}(\mr{PSL}_2(p))=1/(p-1)$}. 
Our next theorem extends this result to all values of the genus.

\begin{theorem}\label{t:plocal}
Let $G$ be a finite group, let $h$ be a positive integer, and let $p$ be an odd prime. If $q_{h,p'}(G)>\alpha(h,p)/(p-1)$, where $\alpha(h,p)= (2^{2h-2}+\sqrt{p-1})/(2^{2h-2}\sqrt{p-1})$, then $G$ is $p$-solvable.
\end{theorem}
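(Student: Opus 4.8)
\emph{Plan.} The idea is to dispose of $h=1$ by hand and to induct on $|G|$ for $h\ge 2$. For $h=1$ we have $q_{1,p'}(G)=d_{p'}(G)$ and $\alpha(1,p)/(p-1)=(p-1)^{-1}+(p-1)^{-3/2}>(p-1)^{-1}$, so the hypothesis forces $d_{p'}(G)>1/(p-1)$ and the conclusion is precisely \cite[Theorem~1.1]{S24}. From now on assume $h\ge 2$; then $q_{h,p'}(H)\le d_{p'}(H)\le 1$ for every finite group $H$, since $\varphi(1)^{2h-2}\ge 1$.

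Next I would record how $q_{h,p'}$ behaves under normal sections, using Clifford theory for Brauer characters. If $N\nrm G$, grouping $\IBr{G}$ into Clifford orbits over $\IBr{N}$, writing $\varphi(1)=[G:G_\theta]\,\theta(1)\,\psi(1)$ for the constituents over $\theta$ and bounding their number by $|\IBr{G\mid\theta}|\le k_{p'}(G_\theta/N)$, one obtains $q_{h,p'}(G)\le q_{h,p'}(N)$; if moreover $N$ is a normal $p$-subgroup then $\IBr{G}$ and $\IBr{G/N}$ coincide with equal degrees and $|G|_{p'}=|G/N|_{p'}$, so $q_{h,p'}(G)=q_{h,p'}(G/N)$. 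For a normal $p'$-subgroup $N$ one must instead estimate directly the Clifford contributions of the nontrivial characters of $N$; here one exploits that the relevant stabilizers are large and the orbits long—using, among other things, that a faithful transitive action of degree $m$ embeds the acting group in $S_m$, so a simple group of order divisible by $p$ has no transitive action of degree $<p$—to show these contributions are small compared with $(p-1)^{-3/2}$. Granting all this, let $G$ be a counterexample of least order. Then every proper normal subgroup of $G$ is $p$-solvable (else it would be a smaller counterexample), so $G$ has a unique maximal normal subgroup $M$, the quotient $G/M$ is non-abelian simple with $p\mid|G/M|$, and $O_p(G)=1$; the section estimates then reduce the problem to bounding $q_{h,p'}(G)$ by $q_{h,p'}(S)$, up to a small error, for a non-abelian simple composition factor $S$ of $G$ with $p\mid|S|$.

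The heart of the matter is the estimate for such an $S$. Splitting off the trivial Brauer character and using $\varphi(1)\ge 2$ for all $\varphi\in\IBr{S}$ with $\varphi\ne 1_S$ (as $S$ is perfect), then invoking $d_{p'}(S)\le 1/(p-1)$ from \cite[Theorem~1.1]{S24} (legitimate since $S$ is not $p$-solvable), gives
\[
q_{h,p'}(S)=\frac{1}{|S|_{p'}}\Big(1+\sum_{\varphi\in\IBr{S},\,\varphi\ne 1_S}\frac{1}{\varphi(1)^{2h-2}}\Big)\le\frac{1}{|S|_{p'}}\Big(1-\frac{1}{2^{2h-2}}\Big)+\frac{d_{p'}(S)}{2^{2h-2}}\le\frac{1}{|S|_{p'}}+\frac{1}{2^{2h-2}(p-1)}.
\]
Since $\alpha(h,p)/(p-1)=(p-1)^{-3/2}+2^{-(2h-2)}(p-1)^{-1}$, this lies below the target threshold as soon as $|S|_{p'}\ge(p-1)^{3/2}$, and the surplus here also absorbs the loss incurred in the reduction. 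Thus the whole theorem comes down to the arithmetic fact that $|S|_{p'}\ge(p-1)^{3/2}$ for every non-abelian simple group $S$ of order divisible by the odd prime $p$; since $|S|_{p'}\ge(p-1)k_{p'}(S)$ by \cite{S24}, it suffices to know $k_{p'}(S)\ge\sqrt{p-1}$, which follows from known lower bounds for the number of $p$-regular classes of simple groups and holds with ample room (the smallest simple group with $p$ in its order already has order of size roughly $p^3$).

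The step I expect to be the main obstacle is the reduction for normal subgroups of order prime to $p$: unlike the $p$-subgroup case, passing to $G/N$ neither preserves nor necessarily decreases $q_{h,p'}$, so the Clifford contributions of the nontrivial characters of $N$ must be bounded by hand, using the orbit lengths and stabilizer sizes to keep the total below $(p-1)^{-3/2}$ after normalization; dealing cleanly with $O_{p'}(G)$ in the minimal counterexample is the delicate point. By contrast, the simple-group estimate is comparatively soft once \cite{S24} is in hand—the remaining care there is the uniform verification of $|S|_{p'}\ge(p-1)^{3/2}$ across the classification, especially for the sporadic groups, the small groups of Lie type, and simple groups in which $p$ divides the order only once.
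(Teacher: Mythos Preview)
Your core estimate is right, but your reduction is both overcomplicated and incomplete, and the paper's route shows that the ``main obstacle'' you identify is a red herring.

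First, the reduction. You assert $q_{h,p'}(G)\le q_{h,p'}(N)$ for an arbitrary normal subgroup~$N$, via a Gallagher-type count $|\IBr{G\mid\theta}|\le k_{p'}(G_\theta/N)$. Even granting this bound, you would still need $k_{p'}(G_\theta/N)\le [G:N]_{p'}$, which is not obvious, and in any case $\theta$ need not extend to $G_\theta$, so the Gallagher count itself is not justified. You then compound the difficulty by trying to push the minimal counterexample all the way down to a simple composition factor, which forces you to control the passage through $O_{p'}(G)$---the step you yourself flag as the delicate one and do not actually carry out. Also, ``unique maximal normal subgroup'' does not follow from your hypotheses.

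The paper sidesteps all of this. It proves monotonicity only for normal subgroups of \emph{prime index}, where Clifford theory is trivial: either $\theta$ is $G$-invariant and extends (cyclic quotient), giving $|\IBr{G\mid\theta}|=[G:N]_{p'}$ constituents of degree $\theta(1)$, or $\theta^G$ is irreducible. This is enough to force a minimal counterexample $G$ to be perfect, and that is all the reduction one needs. Now apply your own degree-$2$ estimate \emph{directly to~$G$} (not to a simple factor): since $G$ is perfect,
\[
q_{h,p'}(G)\le \frac{1}{|G|_{p'}}\cdot\frac{k_{p'}(G)+2^{2h-2}-1}{2^{2h-2}}.
\]
The bound $k_{p'}(G)>\sqrt{p-1}$ of Hung--Mar\'oti \cite{NM22} applies to any non-$p$-solvable group, not just to simple groups; using it here gives
\[
\frac{k_{p'}(G)+2^{2h-2}-1}{2^{2h-2}}\le \alpha(h,p)\,k_{p'}(G),
\]
whence $d_{p'}(G)\ge q_{h,p'}(G)/\alpha(h,p)>1/(p-1)$, contradicting \cite{S24}. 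No passage to a simple quotient, no $O_{p'}(G)$ bookkeeping, no error terms, and no need for the separate verification of $|S|_{p'}\ge (p-1)^{3/2}$ across the classification.
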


Note that $\alpha(h,p)<1$ when both $h > 1$ and $p>2$. Theorem \ref{t:plocal} is not best possible for $h=1$, in which case, again, the best bound is $1/(p-1)$ from~\cite{S24}. We suspect, but have not proved, that the result is also not best possible for $h>1$. 
Our proof relies on a lower bound for $k_{p'}(G)$ when $G$ is not $p$-solvable. At the time of writing, the best known bound is $k_{p'}(G) > \sqrt{p-1}$, which was proved by Hung and Mar{\'o}ti in~\cite{NM22}. An improvement in this bound would lead directly to an improvement in Theorem \ref{t:plocal}.

Finally, it is natural to consider an invariant $\widetilde{q}_h(G)$ dual to $q_h(G)$ by summing over the sizes of the conjugacy classes $C \in \mr{Cl}(G)$ of a finite group $G$, where $\mr{Cl}(G)$ denotes the set of all conjugacy classes of $G$. We define the invariant 
\begin{align*}
    \widetilde{q}_h(G) = \frac{1}{|G|} \sum_{C \in \mr{Cl}(G)} \frac{1}{|C|^{h-1}}.
\end{align*}
Note that $\til{q}_0(G)=1$ and $\til{q}_1(G)=d(G)$. From the definition, $\til{q}_{h}(G) \ge \til{q}_{h+1}(G)$ for all  integers $h\ge 0$. We again prove generalizations of the structure criteria witnessed by the commuting probability to all values of the genus $h$.

\begin{theorem}\label{t:ch-criteria}
Let $G$ be a finite group, and let $h$ be a positive integer.
\begin{enumerate}[nolistsep,label=(\alph*)]
    \item If $\til{q}_h(G) > \til{q}_h(\mr{D}_8)$, then $G$ is abelian.
    \item If $\til{q}_h(G)>\til{q}_h(\mr{S}_3)$, then $G$ is nilpotent.
    \item If $\til{q}_h(G) > \til{q}_h(\textrm{A}_4)$, then $G$ is supersolvable.
    \item If $\til{q}_h(G) > \til{q}_h(\mr{A}_5)$, then $G$ is solvable.
\end{enumerate}
\end{theorem}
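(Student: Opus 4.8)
The plan is to prove, for each part, the contrapositive: if $G$ fails the relevant property, with extremal witness $G_0\in\{\mr{D}_8,\mr{S}_3,\mr{A}_4,\mr{A}_5\}$, then $\til{q}_h(G)\le\til{q}_h(G_0)$ for every $h\ge 1$; the case $h=1$ is the classical statement since $\til{q}_1=d$. Writing $|x^G|$ for the size of the conjugacy class of $x$, one has $\til{q}_h(G)=\frac1{|G|}\sum_{x\in G}|x^G|^{-h}$, so $\til{q}_h(G)$ is the expectation of $|x^G|^{-h}$ over a uniform $x\in G$. Three elementary facts will be used throughout: $\til{q}_h$ is multiplicative on direct products; conjugacy classes can only shrink under a quotient map (the image of $x^G$ in $G/N$ is all of $\bar x^{G/N}$, so $|\bar x^{G/N}|\le|x^G|$), whence summing over cosets of $N$ gives $\til{q}_h(G)\le\til{q}_h(G/N)$ for all $N\nrm G$; and $\til{q}_h(G)\le\til{q}_1(G)=d(G)$ for $h\ge 1$, as $|C|^{1-h}\le 1$.

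Because $t\mapsto t^{-h}$ is decreasing, $\til{q}_h(G)\le\til{q}_h(G_0)$ for all $h\ge 1$ follows once one knows that the distribution of $|x^G|$ (for uniform $x\in G$) is stochastically dominated by that of $|x^{G_0}|$, i.e. that for each class size $s$ of $G_0$ the proportion of elements of $G$ lying in classes of size $<s$ is at most the corresponding proportion for $G_0$. (Equivalently, splitting $\sum_C|C|^{1-h}$ into blocks according to the class sizes of $G_0$ and estimating $\sum_{|C|\ge s}|C|^{1-h}\le s^{-h}\sum_{|C|\ge s}|C|$ in each block, these finitely many estimates recombine to give exactly $\til{q}_h(G_0)$, so the reduction loses nothing.) For the smallest size $s=1$ the statement is $|G:\Zen{G}|\ge|G_0:\Zen{G_0}|$. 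This holds because for the four properties at issue, failure passes to $G/\Zen{G}$: $G/\Zen{G}$ cyclic forces $G$ abelian; $G$ is nilpotent, resp. solvable, iff $G/\Zen{G}$ is; and, using the characterization of supersolvability by cyclic chief factors, $G$ is supersolvable iff $G/\Zen{G}$ is. Hence $G/\Zen{G}$ lies outside the class (resp. is non-cyclic), so $|G/\Zen{G}|$ is at least the least order of such a group, namely $|G_0:\Zen{G_0}|\in\{4,6,12,60\}$. This already finishes part (a), where $\mr{D}_8$ has only one nontrivial class size.

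For parts (b)--(d) the remaining threshold inequalities are element-weighted sharpenings of the theorems of Lescot, of Barry--MacHale--N\'i~Sh\'e, and of Dixon: in a non-nilpotent group at most half the elements lie in classes of size $\le2$; in a non-supersolvable group at most $\tfrac1{12}$ lie in classes of size $\le 2$ and at most $\tfrac13$ in classes of size $\le3$; in a non-solvable group at most $\tfrac1{60},\tfrac5{12},\tfrac23$ lie in classes of size $\le 11,\le 14,\le 19$. I would prove these by induction on $|G|$. Since $\til{q}_h(G)\le\til{q}_h(G/N)$ (and, for the solvability case, passing to $G/R$ with $R$ the solvable radical only decreases the proportions in question), a minimal counterexample is \emph{critical} for the class --- not in it, but with every proper quotient in it --- hence has a unique minimal normal subgroup $N$ with $G/N$ in the class. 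If $N$ is non-abelian, $G$ is non-solvable and $d(G)\le\tfrac1{12}$ by Dixon, so $\til{q}_h(G)\le d(G)\le\tfrac1{12}<\til{q}_h(\mr{S}_3),\til{q}_h(\mr{A}_4)$, settling the critical case of (b) and (c). If $N$ is abelian, then (as supersolvability and nilpotency are saturated formations, $N\not\le\Phi(G)$, so $G$ splits over $N$) one has $G=N\rtimes K$ with $K$ supersolvable, resp. nilpotent, and $N$ an irreducible $K$-module; such $G$ are solvable and tightly constrained, and the class-size data can be read off and the thresholds verified directly.

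Part (d) is where the real difficulty lies. A solvability-critical group is non-solvable with non-abelian socle, so one may assume $G$ has trivial solvable radical; then $\mr{soc}(G)$ is a direct product of non-abelian simple groups, $C_G(\mr{soc}(G))=1$, and an element of class size $\le 19$ produces a subgroup of index $\le 19$, hence a faithful permutation representation of $G$ of degree $\le 19$. By the classification of finite simple groups this leaves finitely many groups to inspect, beyond which one invokes the general principle that groups with trivial solvable radical and large order have a negligible fraction of elements of bounded class size. Carrying out this simple-group analysis --- obtaining sharp enough bounds on indices of subgroups and on the number of small conjugacy classes in almost simple groups and their close relatives --- is the main obstacle; the reductions above are routine by comparison.
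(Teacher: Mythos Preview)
Your stochastic-dominance framework is elegant and does handle part~(a) cleanly; in fact part~(b) also goes through, since elements of class size at most~$2$ lie in $\Cen{G}{G'}$ (any centralizer of index~$2$ is normal and hence contains $G'$), and $\Cen{G}{G'}=G$ forces $G'\le\Zen{G}$, so $[G:\Cen{G}{G'}]\ge 2$ whenever $G$ is not nilpotent. But for part~(d) the approach breaks down at a concrete point: the first threshold inequality you assert --- that in a nonsolvable group at most $1/60$ of the elements lie in classes of size $\le 11$ --- is simply false. In $\mr{S}_5$ the identity and the ten transpositions give a proportion of $11/120>1/60$. Since $\mr{S}_5$ has trivial solvable radical and its only proper quotients are $C_2$ and $1$, it is solvability-critical in your sense and survives all of your reductions; stochastic dominance with $\mr{A}_5$ genuinely fails here even though $\til{q}_h(\mr{S}_5)<\til{q}_h(\mr{A}_5)$ does hold. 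So the method cannot prove~(d) as stated, and the difficulty is not merely the ``simple-group analysis'' you flag at the end but the fact that the inequalities to be analyzed are not true.

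The paper's route is quite different and avoids both this obstruction and the classification of finite simple groups. After reducing a minimal counterexample to one with trivial solvable radical, it invokes Kazarin's theorem (a class of prime-power size generates a solvable normal subgroup) to force every nontrivial class to have size at least~$6$. It then applies an elementary inequality (Lemma~\ref{l:c-inequality}) which, from $\til{q}_h(G)>1/60$, $d(G)\le 1/12$, and minimal class size $n\ge 6$, yields $|G|<300$ outright. That leaves only $\mr{A}_5$, $\mr{S}_5$, and $\mr{PSL}_3(2)$, each dispatched by a direct comparison of $\til{q}_h$-values rather than by any dominance of class-size distributions. Parts~(b) and~(c) follow the same template (reduce, bound the minimal class size, apply Lemma~\ref{l:c-inequality}, check a short list), again without needing your threshold inequalities.
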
 

We also prove a dual criterion for the existence of a normal Sylow $p$-subgroup. 

\begin{theorem}\label{t:pclosed-dual}
Let $G$ be a finite group, let $p$ be a prime and let $h$ be a positive integer. If $\til{q}_h(G)>\til{\gamma}(h,p)$, where $\til{\gamma}(h,p)= (1+1/p^{h-1}+(p-1)/(p+1)^{h-1})/(p(p+1))$, then $G$ has a normal Sylow $p$-subgroup.
\end{theorem}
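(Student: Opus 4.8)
The plan is to prove the contrapositive: if $G$ has no normal Sylow $p$-subgroup then $\til{q}_h(G)\le\til{\beta}(h,p)/(p+1)$. Since $\til{\beta}(1,p)/(p+1)=1/p$ and $\til{q}_1=d$, the case $h=1$ is exactly \cite[Lemma~2(x)]{G06}, so we may assume $h\ge2$; we may also assume $p\mid|G|$, for otherwise the Sylow $p$-subgroup of $G$ is trivial and normal. Argue by induction on $|G|$ and let $G$ be a counterexample of least order. Two soft reductions organise the proof. First, $\til{q}_h$ does not increase under quotients: for $N\nrm G$ and $x\in G$ one has $|\Cen{G}{x}|\le|\Cen{N}{x}|\,|\Cen{G/N}{\q{x}}|$, and summing $h$-th powers over a coset of $N$ and then over $G/N$ --- using only $|\Cen{N}{x}|\le|N|$ --- gives $\til{q}_h(G)\le\til{q}_h(G/N)$ after dividing by $|G|^{h+1}=|N|^{h+1}|G/N|^{h+1}$; hence every proper quotient of $G$ is $p$-closed. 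Second, a subgroup of a $p$-closed group is $p$-closed, so $G$ cannot have two distinct minimal normal subgroups $N_1,N_2$ (it would embed into the $p$-closed group $G/N_1\times G/N_2$). Thus $G$ has a unique minimal normal subgroup $N$, and $N$ is not a $p$-group (else $N$ lies in every Sylow $p$-subgroup and the normal Sylow $p$-subgroup of $G/N$ pulls back to one of $G$). Finally, $\til{\beta}(h,2)/3=\til{q}_h(\mr{S}_3)$, so for $p=2$ Theorem~\ref{t:ch-criteria}(b) forces $G$ nilpotent, hence $2$-closed; thus $p$ is odd from now on.

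If $N$ is non-abelian, then by uniqueness $\Cen{G}{N}=1$, so $G$ has trivial solvable radical; writing $N\cong S^k$ with $S$ a non-abelian simple group, the hypothesis $p\mid|G|$ forces either $p\mid|S|$ --- so that $|S|$, and with it $|N|$, is large --- or $p\mid k!$ --- so that $k\ge p$. In either case the known upper bounds for $\til{q}_h$ of groups with trivial solvable radical (of the same nature as Theorem~\ref{t:ch-criteria}(d) and its proof) place $\til{q}_h(G)$ well below $\til{\beta}(h,p)/(p+1)$, a contradiction. So we may assume $N$ is an elementary abelian $q$-group for some prime $q\ne p$.

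Next fix the structure of the remaining counterexample. Let $P\in\mr{Syl}_p(G)$ and let $L=PN$ be the preimage of the normal Sylow $p$-subgroup $PN/N$ of $G/N$, so $L\nrm G$ and $L=N\rtimes P$. A Frattini argument gives $G=N\,\Norm{G}{P}$. The subgroup $\Cen{N}{P}$ is normalised by $N$ (abelian) and by $\Norm{G}{P}$ (which stabilises $N$ and $P$), so $\Cen{N}{P}\nrm G$; it is proper in $N$, since otherwise $P$ would centralise $N$ and hence be characteristic in $L=N\times P$, forcing $P\nrm G$. By minimality of $N$, $\Cen{N}{P}=1$, and since an element of $N$ normalising $P$ must centralise it ($[N,P]\le N$ meets $P$ trivially), we get $N\cap\Norm{G}{P}=1$; thus $G=N\rtimes A$ with $A=\Norm{G}{P}$ a $p$-closed group having $P$ as its normal Sylow $p$-subgroup. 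Moreover $\Cen{A}{N}\nrm G$ and meets $N$ trivially, so by uniqueness $A$ acts faithfully on $N$; every $A$-invariant subgroup of $N$ is normal in $G$, so $N$ is an irreducible $\mathbb{F}_q[A]$-module; $P$ fixes no nonzero element of $N$, so every $P$-orbit on $N\setminus\{1\}$ has size divisible by $p$, whence $|N|\equiv1\pmod p$ and $|N|\ge p+1$; and $\Zen{G}=1$ since $\Zen{G}\cap N\le\Cen{N}{P}=1$ and $\Zen{G}\nrm G$.

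It remains to bound $\til{q}_h(G)=|G|^{-1}\sum_{C\in\mr{Cl}(G)}|C|^{1-h}$, which we do by splitting $\mr{Cl}(G)$ into the classes meeting $N$ and the rest. A class meeting $N$ in the $A$-orbit $\mc{O}$ has size $|\mc{O}|$; the orbit $\{1\}$ contributes $1$, and there are at most $(|N|-1)/p$ nontrivial orbits, each of size at least $p$, so these classes contribute at most $1+\tfrac{|N|-1}{p}p^{1-h}$. Dividing by $|G|=|N|\,|A|$ and using $|N|\ge p+1$ (and, when $|A|>p$, that then $|A|\ge 2p$), one checks this is at most $\tfrac{1}{p(p+1)}+\tfrac{1}{(p+1)p^h}$, with equality only if $|A|=p$ and $|N|=p+1$. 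For the classes with nontrivial $A$-part, which account for the $|N|(|A|-1)$ elements of $G\setminus N$, one bounds their sizes from below --- using that $A$ acts faithfully and irreducibly on $N$ with $\Cen{N}{P}=1$, the minimality of $G$, and, when $A$ is non-solvable, Theorem~\ref{t:ch-criteria}(d) --- so that (for $h\ge2$) their contribution after division by $|G|$ is at most $\tfrac{p-1}{p(p+1)^h}$, again with equality forced only when $A=C_p$ acts fixed-point-freely and $|N|=p+1$. Adding the two estimates gives $\til{q}_h(G)\le\til{\beta}(h,p)/(p+1)$; and equality makes $G$ the Frobenius group $V\rtimes C_p$ with $|V|=p+1$, whose class sizes are $1$, $p$ (once) and $p+1$ ($p-1$ times), for which one computes $\til{q}_h=\til{\beta}(h,p)/(p+1)$ directly --- contradicting the choice of $G$. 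The main obstacle is precisely this last estimate: controlling the sizes of the classes with nontrivial $A$-part sharply and uniformly in $h\ge2$, tightly enough to isolate the Frobenius group, together with the non-solvable configurations handled through Theorem~\ref{t:ch-criteria} and classification-based bounds on conjugacy-class data of (almost) simple groups.
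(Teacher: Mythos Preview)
Your argument has a genuine gap at its most critical step. You split the classes of $G$ into those contained in $N$ and those with nontrivial $A$-part, and you explicitly flag the second estimate as ``the main obstacle'' --- but you never actually prove it. The target bound $\sum_{C\not\subseteq N}|C|^{1-h}/|G|\le (p-1)/(p(p+1)^h)$ would essentially require every class outside $N$ to have size at least $|N|$, with at most $|A|-1$ such classes; neither follows from what you have established. You only know that $A=\Norm{G}{P}$ is $p$-closed and acts faithfully and irreducibly on $N$. Nothing prevents $A$ from being much larger than $p$, nor elements of $A$ of order coprime to $p$ from having large fixed-point spaces in $N$ (hence small $G$-classes). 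Invoking ``the minimality of $G$'' buys nothing here, since $A$ is already $p$-closed and so is not itself a smaller counterexample. The non-abelian $N$ case is likewise incomplete: Theorem~\ref{t:ch-criteria}(d) only yields $\til q_h(G)\le\til q_h(\mathrm A_5)$, and already for $p=11$, $h=2$ one has $\til q_2(\mathrm A_5)>\til\gamma(2,11)$, so this does not suffice; the appeal to unspecified ``classification-based bounds on conjugacy-class data'' is not a proof.

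The paper sidesteps this difficulty by proving, \emph{before} any case split on $N$, that every nontrivial class of $G$ has size at least $p$. The argument is short: if $|x^G|<p$, then $G/U$ embeds in $\mathrm S_t$ with $t<p$ where $U$ is the core of $\Cen{G}{x}$, so $p\nmid|G/U|$ and $P\le U$; since $U\nrm G$ is nontrivial and $N$ is the unique minimal normal subgroup, $N\le U\le\Cen{G}{x}$, forcing $x\in\Cen{G}{N}$, and then either $\Cen{G}{N}=1$ (non-abelian $N$) or $x\in\Cen{N}{P}=1$ (abelian $N$, using $N\cap\Norm{G}{P}=1$). This single uniform lower bound on class sizes feeds into the inequality $\til\gamma(h,p)<\til q_h(G)\le|G|^{-1}(1-p^{1-h})+d(G)p^{1-h}$ together with $d(G)\le 1/p$, giving $|G|<2p(p+1)$ for $h\ge 2$. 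From there $|P|=p$, $G=PN$, $[G:\Norm{G}{P}]\in\{p+1,2p+1\}$, and the non-abelian $N$ case dies by Kazarin's theorem and the size bound; the abelian case reduces to an explicit computation on two Frobenius groups. The point is that a uniform bound on \emph{all} nontrivial class sizes is what gives control of $|G|$ and hence of the whole structure; your coset-by-coset partition leaves you unable to bound precisely the piece where the difficulty lives.
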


This criterion is best possible, as again the Frobenius group $G=(C_2)^f \rtimes C_p$ with $p=2^f-1$ a Mersenne prime satisfies $\til{q}_h(G) = \til{\gamma}(h,p)$.

We conclude this Introduction with some remarks.

\begin{remark}
The content of Theorems \ref{t:qh-criteria}(a)--(d) and \ref{t:ch-criteria}(a)--(d) is that the group-theoretic properties of commutativity, nilpotency, supersolvability and solvability are witnessed by the value of $q_h(G)$ or $\til{q}_h(G)$ for a particular ``minimal'' group $G$ for all $h \ge 1$. For example, Theorem \ref{t:qh-criteria}(d) says that solvability is witnessed by $\mr{A}_5$ for all values of the genus $h$. Of course, our theorems would have trivial content if the invariants preserved the ordering of finite groups for all $h$; that is, if for all finite groups $G$ and $H$ it were true that $q_h(G) > q_h(H)$ for some $h>1$ implied that $d(G) > d(H)$, then all our theorems would follow trivially from the corresponding result for~$d(G)$. But this is not the case. For example, if $G=\mr{S}_6$ and $H=\mr{PGL}_2(9)$, then $q_h(G)>q_h(H)$ and $\til{q}_h(G)>\til{q}_h(H)$ for all $h>1$, but $d(G)=d(H)$. 

Our next example illustrates a general phenomenon: Notice that for a finite group~$G$, the invariant $q_h(G)$ approaches $1/|G'|$ as $h$ approaches infinity, where $G'$ denotes the derived subgroup of $G$. Therefore, if $d(G)>d(H)$ for some finite groups $G$ and $H$, but $1/|G'|<1/|H'|$, then there must exist some positive integer $N$ such that $q_h(G)<q_h(H)$ for all $h>N$. For example, let $G=\mr{A}_5$, and let $H$ be an extraspecial $p$-group of order $p^3$ for the prime $p=59$. Then $d(G)=1/12$ and $1/|G'|=1/60$, while $d(H)=(p^2+p-1)/p^3$ and $1/|H'|=1/p$. Then $d(G)>d(H)$, while $1/|G'|<1/|H'|$, and the preceding observation applies. In fact, a calculation shows that $q_h(G)>q_h(H)$ for $h=1,2,3$, while $q_h(G)<q_h(H)$ for $h \ge 4$. 

For the dual invariant, take $G=\mr{A}_5$ and let $H$ be an extraspecial $p$-group of order $11^3$. Then $d(G)<d(H)$ yet $\til{q}_h(G)>\til{q}_h(H)$ for all integers $h\ge 2.$
\end{remark}

\begin{remark}
By examining our proofs, it is easy to see that our results hold not only when the exponent in $q_h(G)$ is the negative of the Euler characteristic $2-2h$ of a genus-$h$ surface, but for any real number $s \ge 1$. In this way, our results are related to the so-called ``zeta function'' $\zeta_G(s) = \sum_{\chi \in \Irr{G}} \chi(1)^{-s}$ of a finite group $G$, which was used by Liebeck and Shalev to prove various results on Fuchsian groups, coverings of Riemann surfaces, subgroup growth, random walks and representation varieties \cite{LiSh04, LiSh05, LiSh05ii}. We also note that the zeta function for conjugacy class sizes was studied in~\cite{LiSh05} for finite groups of Lie type, with applications to base sizes of primitive permutation groups. Since our motivation stems from topological quantum field theories, we have stated and proved our results in this special case.
\end{remark}

\begin{remark}
It is known that $d(G) \le d(N) d(G/N)$ when $N \nrm G$ \cite[Lemma~2(ii)]{G06}. Our proofs would be simplified if $q_h(G) \le q_h(N) q_h(G/N)$ for all $h$. We have not been able to prove this statement, however. If it were true, then it must of course be true for $N=\Zen{G}$. But this already seems to be a hard question about projective character degrees: Letting $Z=\Zen{G}$,
\begin{align*}
    q_h(G) = \frac{1}{|G/Z||Z|} \sum_{\lambda \in \Irr{Z}} \sum_{\chi \in \Irr{G|\lambda}} \frac{1}{\chi(1)^{2h-2}}.
\end{align*}
So to prove that $q_h(G) \le q_h(G/Z)$, it would suffice to show, for example, that
\begin{align*}
    \sum_{\chi \in \Irr{G|\lambda}} \frac{1}{\chi(1)^{2h-2}} \le \sum_{\chi \in \Irr{G/Z}} \frac{1}{\chi(1)^{2h-2}};
\end{align*}
that is, we want to show the sum over projective character degrees of $G/Z$ is less than the sum over ordinary character degrees. We know that they are fewer in number: $|\Irr{G|\lambda}| \le k(G/Z)$ as a consequence of Gallagher's result on ``$\theta$-good'' conjugacy classes (see Theorem~1.19 and Corollary~1.23 in \cite{I18}). 
\end{remark}


\begin{remark}
The ratios $|G|/\chi(1)$ for $\chi \in \mr{Irr}(G)$ are the character degree quotients originally considered by Chillag and Herzog in \cite{CH}. In order to be compatible with group quotients, many authors have considered instead the character codegrees $[G:\mr{ker}(\chi)]/\chi(1)$. Note that if $G$ is simple, then the character codegrees and character degree quotients of nontrivial characters coincide.
\end{remark}

\begin{remark}
Our ``dual invariant'' $\til{q}_h(G)$ also has a physical interpretation in the context of lattice Hamiltonians for Yang--Mills gauge theories with finite gauge group~\cite{Mar23}. These theories are motivated by the potential for quantum simulation of gauge theories.
\end{remark}


\medskip
Our paper is structured as follows. As our methods are purely group-theoretic, we defer an introduction to Dijkgraaf--Witten theory to Section \ref{s:tqft}. 
In Section \ref{s:prelim}, we make some remarks and collect some known facts that will be needed in the rest of the paper. In Section~\ref{s:criteria}, we prove Theorems \ref{t:qh-criteria}(a)--(d) and Theorem~\ref{t:plocal}. In Section~\ref{sec:p-closed}, we prove Theorem~\ref{t:pclosed}. In Section~\ref{s:ch-criteria} we prove Theorems~\ref{t:ch-criteria}(a)--(d) and Theorem~\ref{t:pclosed-dual}. Only the proof of Theorem \ref{t:plocal} requires us to cite results that depend on the classification of finite simple groups. We do use Brauer's $k(GV)$-theorem in the proof of Theorem~\ref{t:qh-criteria}(c), but there the group $G$ is solvable, and the proof in that case does not rely on the classification. All groups considered in this paper are assumed to be finite.

\medskip\noindent\textbf{Notation}. 
Our notation is standard. We follow \cite{I08} for finite group theory and \cite{I94} for the character theory of finite groups.
Let $n$ be a positive integer. We denote by $\mr{A}_n$ and $\mr{S}_n$ the alternating and symmetric groups of degree $n$, respectively. The dihedral group of order $2n$ is denoted by $\mr{D}_{2n}$, and a cyclic group of order $n$ by $C_n$. The special linear, projective general linear, and projective special linear groups of degree $n$ over the finite field of size $q$ are denoted by $\mr{SL}_n(q)$, $\mr{PGL}_n(q)$, and $\mr{PSL}_n(q)$, respectively.
For a finite group $G$ and a subset $X \subseteq G$, we write $\Cen{G}X$ and $\Norm{G}X$ for the centralizer and normalizer of $X$ in $G$, respectively. The derived subgroup of $G$ is denoted by $G'$, and $\mbf{Z}(G)$ denotes the center of $G$. The set of complex irreducible characters of $G$ is denoted by $\mr{Irr}(G)$, and we let $k(G)$ be the number of conjugacy classes of $G$. We will repeatedly use that $k(G)=|\mr{Irr}(G)|$. We denote by $\mr{Cl}(G)$ the set of all conjugacy classes of $G$. If $N \le G$ and $\lambda \in \mr{Irr}(N)$, then $\mr{Irr}(G \vert \lambda)$ denotes the set of all irreducible characters of $G$ lying above $\lambda$. If $p$ is a prime, the set of irreducible $p$-Brauer characters of $G$ is denoted by $\mr{IBr}(G)$. All other notation is standard.

\section{Preliminaries.}\label{s:prelim}

As noted in the introduction, the invariants $q_h(G)$ can be viewed as generalizations of the commuting probability $d(G)=k(G)/|G|$ first considered by Joseph \cite{J69,J77}, then by Gustafson~\cite{G73}. Our proofs are based on the extensive literature on this invariant. Our first lemma collects some of the facts that are already known.

\begin{lemma}\label{l:dG} 
Let $G$ be a finite group.
\begin{enumerate}[label=(\alph*)]
    \item We have $q_1(G)=d(G)$ and $q_h(G) \ge q_{h+1}(G)$ for all $h \ge 0$.
    \item If $G$ is perfect and $d(G)>1/20$, then $G \cong \mr{A}_5$ or $G \cong \mr{SL}_2(5)$.
    \item If $p$ is a prime such that a Sylow $p$-subgroup of $G$ is not normal, then $d(G) \le 1/p$.
    \item If $G$ is a nonabelian $p$-group for some prime $p$, then $d(G) \le 1/p+1/p^2$.
    \item If $G$ is nilpotent with a nonabelian Sylow $p$-subgroup for some prime $p$, then $d(G)<1/p$ or $|G'|=p$.
    \item Whenever $N \nrm G$, we have $d(G) \le d(G/N)d(N)$. In particular, we always have $d(G)\le d(G/N)$.
\end{enumerate}
\end{lemma}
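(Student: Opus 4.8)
The plan is to treat the five parts separately: (a), (d), (e) can be proved directly, while (b), (c) are recorded from the literature.

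Part (a) is immediate from the definition of $q_h$. Since $\chi(1)\ge 1$ for all $\chi\in\Irr{G}$, we have $q_1(G)=|G|^{-1}\sum_{\chi\in\Irr{G}}1=k(G)/|G|=d(G)$, and for every $h\ge 0$,
\[
q_h(G)-q_{h+1}(G)=\frac{1}{|G|}\sum_{\chi\in\Irr{G}}\chi(1)^{-2h}\bigl(\chi(1)^2-1\bigr)\ge 0 .
\]
Part (c) is exactly \cite[Lemma~2(x)]{G06} in contrapositive form, as quoted in the Introduction. Part (b) is a known fact that depends on the classification of finite simple groups: one records that $\mr{A}_5$ is the unique nonabelian simple group with $d>1/20$ (indeed $d(\mr{A}_5)=1/12$, and the next largest value among nonabelian simple groups is $d(\mr{PSL}_2(7))=1/28$), and then a short induction on $|G|$ using $d(G)\le d(N)d(G/N)$ for $N\nrm G$ forces a perfect group with $d(G)>1/20$ to be $\mr{A}_5$ or its Schur cover $\mr{SL}_2(5)$; the point is that $d(S)^2<1/20$ for every nonabelian simple $S$, so a minimal normal subgroup cannot be a product of two or more nonabelian simple factors.

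The key tool for (d) and (e) is the following character-degree estimate for a $p$-group $P$. Put $a_i=|\{\chi\in\Irr{P}:\chi(1)=p^i\}|$, so $a_0=|P:P'|$ and $\sum_{i\ge 0}a_ip^{2i}=|P|$; since $p^{2i}\ge p^2$ for $i\ge 1$, we get $\sum_{i\ge 1}a_i\le p^{-2}\bigl(|P|-|P:P'|\bigr)=p^{-2}|P:P'|\bigl(|P'|-1\bigr)$, and dividing $k(P)=\sum_i a_i$ by $|P|=|P:P'|\,|P'|$ yields
\[
d(P)\le \frac{1}{|P'|}+\frac{1}{p^2}\Bigl(1-\frac{1}{|P'|}\Bigr).
\]
For part (d), a nonabelian $p$-group has $|P'|\ge p$, so the right-hand side is at most $(p^2+p-1)/p^3<\tfrac1p+\tfrac1{p^2}$; the same bound also drops out of the class equation using $|P:\Zen{P}|\ge p^2$ and the fact that noncentral classes have size $\ge p$.

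For part (e), nilpotency lets us write $G=P\times H$, where $P\in\mathrm{Syl}_p(G)$ is nonabelian and $H$ is the Hall $p'$-subgroup; then $d(G)=d(P)d(H)$ and $G'=P'\times H'$. If $|P'|\ge p^2$, the displayed estimate gives $d(P)\le (2p^2-1)/p^4<1/p$, hence $d(G)\le d(P)<1/p$. Otherwise $|P'|=p$; if moreover $H$ is nonabelian, it has a nonabelian Sylow $q$-subgroup $H_q$ with $q\ne p$, so by (d), $d(H)\le d(H_q)\le (q^2+q-1)/q^3\le 5/8$, and together with $d(P)\le (p^2+p-1)/p^3$ this gives $d(G)=d(P)d(H)<1/p$ (because $5(p^2+p-1)<8p^2$ for every prime $p$). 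In the only remaining case $H$ is abelian, so $|G'|=|P'|\cdot|H'|=p$. I expect the genuine difficulty in (e) to be exactly the case $|P'|\ge p^2$: the crude class-equation bound used for (d) only gives $d(P)\le \tfrac1p+\tfrac{p-1}{p\,|P:\Zen{P}|}$, which can be $\ge 1/p$ (for instance when $|P:\Zen{P}|=p^2$), so one must genuinely use character degrees — or a comparable device — to push $d(P)$ strictly below $1/p$; everything else is routine numerics.
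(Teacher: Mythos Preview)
Your proposal is correct. The paper's own proof of this lemma is nothing but a list of citations: part~(a) is immediate from the definition, (b) is \cite[Proposition~1.6]{C25}, and (c)--(e) are \cite[Lemma~2(x),(viii),(ix)]{G06} respectively. Your treatment of (a) and (c) coincides with the paper's.

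Where you differ is in supplying self-contained arguments for (d) and (e) via the character-degree inequality
\[
d(P)\le \frac{1}{|P'|}+\frac{1}{p^2}\Bigl(1-\frac{1}{|P'|}\Bigr),
\]
which you derive correctly from $\sum_i a_ip^{2i}=|P|$. Your case split in (e) (namely $|P'|\ge p^2$; $|P'|=p$ with $H$ nonabelian; $|P'|=p$ with $H$ abelian) is clean, and the numerical checks ($(2p^2-1)/p^4<1/p$ and $5(p^2+p-1)<8p^2$) are valid for all primes $p$. This is more informative than the paper's bare citation and in fact recovers exactly the content of the Guralnick--Robinson lemmas being cited.

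For (b), both you and the paper ultimately defer to the literature. Your sketch correctly disposes of the case where the minimal normal subgroup is a nonabelian direct power, but as written it does not spell out why an abelian minimal normal subgroup forces $G\cong\mr{SL}_2(5)$ (one needs that $N$ is central, hence bounded by the Schur multiplier of $\mr{A}_5$, and that non-central irreducible $\mathbb{F}_p\mr{A}_5$-modules are too large to keep $d(G)>1/20$). Since you present this as a sketch of a known CFSG-dependent fact rather than a full proof, this matches the paper's citation-only approach; but if you intend it to stand alone, that step needs to be filled in. The closing meta-commentary about where the difficulty lies in (e) is accurate but unnecessary in a proof.
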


\begin{proof}
Part (a) follows immediately from the definition using $|\Irr{G}|=k(G)$ and \mbox{$\chi(1) \ge 1$} for all $\chi \in \Irr{G}$. Part (b) is \cite[Proposition 1.6]{C25}. Part (c) is \cite[Lemma 2(x)]{G06}. Part~(d) is \cite[Lemma 2(viii)]{G06}. Part (e) is \cite[Lemma 2(ix)]{G06} and part (f) is \cite[Lemma~2(ii)]{G06}.
\end{proof}

\noindent Our next lemma shows that $q_h(G)$ is monotone with respect to subgroups $H \le G$ for all $h \ge 1$. 

\begin{lemma}\label{l:mono}
Let $G$ be a finite group, and let $h$ be any positive integer. If $H \le G$, then \mbox{$q_h(H) \ge q_h(G)$}.
\end{lemma}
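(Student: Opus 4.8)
The plan is to pass to character degrees and exploit restriction from $G$ to $H$. Writing $t=2-2h$, so that $t\le 0$ since $h\ge 1$, and clearing denominators, the asserted inequality $q_h(H)\ge q_h(G)$ is equivalent to
\[
  \sum_{\chi\in\Irr{G}}\chi(1)^{t}\ \le\ [G:H]\sum_{\psi\in\Irr{H}}\psi(1)^{t}.
\]
The right-hand side carries an ``extra'' factor $[G:H]$, which must absorb the discrepancy between $|H|$ and $|G|$, and the whole argument consists of producing exactly this factor from a counting bound on how the irreducible characters of $H$ sit under those of $G$. For $h=1$ this specializes to the familiar inequality $k(G)\le[G:H]\,k(H)$, i.e. to the monotonicity $d(H)\ge d(G)$ of the commuting probability, so one should read the lemma as the genus-$h$ refinement of that fact.

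First I would fix $\chi\in\Irr{G}$ and decompose its restriction, $\chi_H=\sum_{\psi\in\Irr{H}}\langle\chi_H,\psi\rangle\,\psi$. Since every irreducible constituent $\psi$ of $\chi_H$ satisfies $\psi(1)\le\chi(1)$, and $x\mapsto x^{t}$ is non-increasing on the positive reals (as $t\le0$), we get $\chi(1)^{t}\le\psi(1)^{t}$ for each such $\psi$, and hence $\chi(1)^{t}\le\sum_{\psi}\psi(1)^{t}$, the sum ranging over the distinct irreducible constituents of $\chi_H$ (there is at least one). Summing over all $\chi\in\Irr{G}$ and interchanging the order of summation yields
\[
  \sum_{\chi\in\Irr{G}}\chi(1)^{t}\ \le\ \sum_{\psi\in\Irr{H}}n_\psi\,\psi(1)^{t},\qquad
  n_\psi:=\bigl|\{\chi\in\Irr{G}:\ \langle\chi_H,\psi\rangle\ge 1\}\bigr|.
\]

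It then remains to show that $n_\psi\le[G:H]$ for every $\psi\in\Irr{H}$, and this is the only step that is not purely formal. By Frobenius reciprocity $\psi^{G}=\sum_{\chi\in\Irr{G}}\langle\chi_H,\psi\rangle\,\chi$, so
\[
  [G:H]\,\psi(1)=\psi^{G}(1)=\sum_{\chi\in\Irr{G}}\langle\chi_H,\psi\rangle\,\chi(1)\ \ge\ \sum_{\chi:\ \langle\chi_H,\psi\rangle\ge1}\chi(1)\ \ge\ n_\psi\,\psi(1),
\]
where the last inequality uses once more that $\chi(1)\ge\psi(1)$ whenever $\psi$ is a constituent of $\chi_H$; dividing by $\psi(1)$ gives $n_\psi\le[G:H]$. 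Substituting this into the previous display finishes the argument. I do not expect a genuine obstacle: the substance of the lemma is carried entirely by the elementary inequalities $\psi(1)\le\chi(1)$ for restriction constituents and the degree bound $n_\psi\le[G:H]$, and the hypothesis $h\ge 1$ enters only through the sign condition $t\le 0$.
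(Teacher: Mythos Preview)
Your proof is correct and follows essentially the same approach as the paper's: both arguments rest on Frobenius reciprocity, the inequality $\chi(1)\ge\psi(1)$ for $\psi$ a constituent of $\chi_H$, and the resulting bound $|\Irr{G\mid\psi}|\le[G:H]$ (your $n_\psi$). The paper bundles the two estimates into a single bound $\sum_{\chi\in\Irr{G\mid\psi}}\chi(1)^{t}\le[G:H]\,\psi(1)^{t}$, whereas you separate them, but the content is identical.
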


\begin{proof}
We have
\begin{align*}
    q_h(G) &= \frac{1}{|G|} \sum_{\chi \in \Irr{G}} \frac{1}{\chi(1)^{2h-2}} 
    \le \frac{1}{|G|} \sum_{\psi \in \Irr{H}} \sum_{\chi \in \Irr{G|\psi}} \frac{1}{\chi(1)^{2h-2}} \\
    & \le \frac{1}{|G|} \sum_{\psi \in \Irr{H}} \frac{[G:H]}{\psi(1)^{2h-2}} 
    =\frac{1}{|H|} \sum_{\psi \in \Irr{H}} \frac{1}{\psi(1)^{2h-2}} = q_h(H).
\end{align*}
The first inequality follows because we have enumerated all the $\chi \in \Irr{G}$ (possibly multiple times). The characters in $\Irr{G|\psi}$ are the constituents of $\psi^G$ 
by Frobenius reciprocity, and $\psi^G(1)=[G:H]\psi(1)$. The sum $\sum_{\chi \in \Irr{G|\psi}} 1/\chi(1)^{2h-2}$ is as large as possible when all the degrees are as small as possible and $|\Irr{G|\psi}|$ is as large as possible. Since $\psi$ lies under $\chi$, $\chi(1) \ge \psi(1)$. If equality holds for all $\chi \in \Irr{G|\psi}$, then $\chi(1)$ is as small as possible and $|\Irr{G|\psi}|=[G:H]$ is as large as possible, which yields the second inequality. 
\end{proof}

We will need the following result, which slightly improves Lemma 2(x) in \cite{G06} for nonabelian simple groups.

\begin{lemma}\label{lem1}
Let $G$ be a finite nonabelian simple group, and let $p$ be a prime dividing~$|G|$.  Then $d(G)\leq {1}/{(p+1)}$.
\end{lemma}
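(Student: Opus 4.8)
The plan is to reduce to the case where $p$ is the largest prime dividing $|G|$, and then to prove the inequality by bounding the sizes of the nontrivial conjugacy classes of $G$ from below. Since a nonabelian simple group has no normal Sylow subgroup (a normal Sylow subgroup would be all of $G$, forcing $G$ to be a nontrivial $p$-group, hence abelian), Lemma~\ref{l:dG}(c) applies to \emph{every} prime $q$ dividing $|G|$ and gives $d(G)\le 1/q$. In particular, if $|G|$ has a prime divisor $q>p$, then $d(G)\le 1/q\le 1/(p+1)$ and we are done; so from now on we may assume that $p$ is the largest prime divisor of $|G|$.

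Next I would dispose of the small primes using Lemma~\ref{l:dG}(b). Since $G$ is perfect and $\mr{SL}_2(5)$ is not simple, either $G\cong\mr{A}_5$ or $d(G)\le 1/20$. In the first case $p\in\{2,3,5\}$ and $d(\mr{A}_5)=\tfrac{1}{12}\le\tfrac1{p+1}$ by inspection, while in the second case $d(G)\le\tfrac1{20}\le\tfrac1{p+1}$ for every prime $p\le 19$. This leaves only the case in which $p\ge 23$ is the largest prime divisor of $|G|$, and in particular $G\not\cong\mr{A}_5$.

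For this remaining case, observe that $d(G)=k(G)/|G|\le 1/(p+1)$ is equivalent to saying that the average conjugacy class size $|G|/k(G)$ is at least $p+1$. I would deduce this from the estimate $|x^G|\ge 2(p+1)$ for every $1\ne x\in G$. Indeed, granting this, $|G|=1+\sum|x^G|\ge 1+2(p+1)(k(G)-1)$ (the sum running over representatives of the nontrivial classes), so $k(G)\le\frac{|G|-1}{2(p+1)}+1\le\frac{|G|}{p+1}$, using $|G|\ge 2p+1$ (which holds since $|G|\ne 2p$, groups of order $2p$ being solvable). The weaker bound $|x^G|\ge p$ is immediate: for $1\ne x$ the group $G$ acts faithfully and transitively on the $m:=[G:\Cen{G}{x}]=|x^G|$ cosets of $\Cen{G}{x}$ (the kernel is a proper normal subgroup, hence trivial), and a nonabelian simple permutation group lies inside the alternating group, so $G\hookrightarrow\mr{A}_m$; hence $|G|$ divides $m!/2$, which forces $m\ge p$ as $p$ is an odd prime. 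To improve this to $m\ge 2(p+1)$ one uses that $x\in\Zen{\Cen{G}{x}}\setminus\{1\}$, i.e.\ $\Cen{G}{x}$ is a proper subgroup of index $m$ with nontrivial centre; together with Burnside's theorem on transitive groups of prime degree (and a little more for $m$ just above $p$), this rules out $m<2(p+1)$.

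The main obstacle is precisely this last estimate: showing that a nonabelian simple group has no proper subgroup of index in the range $[p,\,2(p+1))$ with nontrivial centre, equivalently that every nontrivial conjugacy class has size at least $2(p+1)$ (which is sharp, being attained by $\mr{A}_5$ at $p=5$). The reductions in the first two paragraphs and the bare inequality $|x^G|\ge p$ are routine; the factor-two improvement, which is exactly what upgrades Lemma~\ref{l:dG}(c) from $1/p$ to $1/(p+1)$, is where the content lies.
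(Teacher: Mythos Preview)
Your reductions in the first two paragraphs are fine, and the arithmetic deducing $d(G)\le 1/(p+1)$ from a uniform lower bound $|x^G|\ge 2(p+1)$ is correct. The problem is exactly where you say it is: you never prove that every nontrivial class has size at least $2(p+1)$, and the hints you give do not close the gap. Burnside's $p^\alpha$-lemma (a class of prime-power size forces non-simplicity) rules out $m=p$, and the embedding $G\hookrightarrow\mr{A}_m$ shows $m\ge p$; but for the values $m\in\{p+1,\dots,2p+1\}$ neither tool bites. ``Burnside on transitive groups of prime degree'' only addresses $m=p$, which is already handled, and the observation that $\Cen{G}{x}$ has nontrivial centre does not by itself exclude a point stabiliser of this shape in a 2-transitive action of composite degree. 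In short, the statement ``a nonabelian simple group with largest prime divisor $p\ge 23$ has no nontrivial class of size below $2(p+1)$'' is doing all the work, and you have not supplied a proof of it; absent CFSG-level input or a separate structural argument, it is not clear how to obtain it.

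The paper's argument avoids class sizes entirely. It first reduces to $p\ge 13$ via Dixon's bound $d(G)\le 1/12$, then obtains $|G|\ge 2p(p+1)$ from Sylow arithmetic together with Burnside's normal $p$-complement theorem (so $[G:\Norm{G}{P}]\ge p+1$ and $|\Norm{G}{P}|\ge 2p$). The decisive input is on the character side: Feit--Thompson gives that the minimal nontrivial character degree satisfies $d\ge (p-1)/2$, and plugging this and $|G'|=|G|\ge 2p(p+1)$ into the Guralnick--Robinson inequality $d(G)<1/d^{2}+(1-1/d^{2})/|G'|$ yields a polynomial inequality in $p$ that fails for $p\ge 13$. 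So the paper replaces your unproved class-size bound by a character-degree bound that \emph{is} available off the shelf; your approach is the natural ``dual'' strategy, but the required conjugacy-class analogue of Feit--Thompson is precisely what is missing.
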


\begin{proof} Suppose by contradiction that $d(G)>1/(p+1)$. Since $G$ is a finite nonabelian simple group, by \cite{D73} we know that $d(G)\leq 1/12.$ Hence $1/(p+1)<1/12$, which implies that $p\ge 13$ as $p$ is a prime. Let $P$ be a Sylow $p$-subgroup of $G$. Suppose that \mbox{$[G:\Norm{G}{P}] \leq p$}. Since $[G:\Norm{G}{P}]$ is coprime to $p$, we deduce that $[G:\Norm{G}{P}]\leq p-1$. Since $G$ is nonabelian simple, the core of $\Norm{G}{P}$ in $G$ is trivial, so $G$ embeds into $\textrm{S}_{p-1}$, the symmetric group of degree $p-1$. But then $|G|$ is not divisible by $p$, which is a contradiction. Thus $[G:\Norm{G}{P}]\ge p+1$. By Burnside's normal $p$-complement theorem (\cite[Theorem~IV.2.6]{Hu67}), we have $\Norm{G}{P}\neq \Cen{G}{P}$ and hence $|\Norm{G}{P}|\ge 2p$. Combining these two inequalities, we obtain
\begin{align*}
    |G|=[G:\Norm{G}{P}]\cdot |\Norm{G}{P}|\ge 2p(p+1).
\end{align*}
Let $d$ be the minimal degree of a nontrivial complex irreducible character of $G$. By \cite[Theorem~1]{FT}, we have $d\ge (p-1)/2$.
By \cite[Lemma 2(vi)]{G06}, noting that $G'=G$, we have \[d(G)<\frac{1}{d^2}+\Big(1-\frac{1}{d^2}\Big)\frac{1}{|G'|}\leq \frac{4}{(p-1)^2}+\frac{1}{2p(p+1)}.\] Since $d(G)>1/(p+1)$, it follows that \[\frac{1}{p+1}<\frac{4}{(p-1)^2}+\frac{1}{2p(p+1)}.\] Simplifying this inequality gives $2p^3-13p^2-4p<1$, or equivalently $2p^3-13p^2-4p\leq 0$. This can be rewritten as $(2p-13)p\leq 4$ which is impossible for $p\ge 13.$
Hence, the assumption $d(G)>1/(p+1)$ leads to a contradiction, and the lemma follows.
\end{proof}

We also need the following easy lemma due to Lescot \cite{Le87}, whose proof we include for completeness.

\begin{lemma}\label{l:G'}
If $d(G)>1/4$, then $|G'| \le 3/(4d(G)-1)$.
\end{lemma}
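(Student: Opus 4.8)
The plan is to prove the universally valid estimate
\begin{align*}
    d(G) \le \frac{1}{4} + \frac{3}{4|G'|}
\end{align*}
first, and then rearrange it using the hypothesis $d(G)>1/4$. To get this estimate I would split $\Irr{G}$ into the linear characters and the nonlinear ones. The linear characters of $G$ are exactly the irreducible characters of the abelian quotient $G/G'$, so there are precisely $[G:G']$ of them; every remaining $\chi \in \Irr{G}$ has $\chi(1)\ge 2$, hence $\chi(1)^2 \ge 4$.

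Next I would feed this dichotomy into the identity $\sum_{\chi \in \Irr{G}} \chi(1)^2 = |G|$. The linear characters contribute exactly $[G:G']$ to this sum, so the nonlinear ones contribute $|G| - [G:G']$; since each of them contributes at least $4$, there are at most $(|G|-[G:G'])/4$ nonlinear irreducible characters. Consequently
\begin{align*}
    k(G) \le [G:G'] + \frac{|G| - [G:G']}{4} = \frac{|G|}{4} + \frac{3[G:G']}{4},
\end{align*}
and dividing through by $|G|$ yields $d(G) \le \tfrac14 + \tfrac{3}{4|G'|}$.

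Finally, since $d(G) > 1/4$ we have $4d(G)-1 > 0$, so the inequality $4|G'|\,d(G) \le |G'| + 3$ obtained from the previous display may be rewritten, without reversing it, as $|G'|(4d(G)-1) \le 3$, that is, $|G'| \le 3/(4d(G)-1)$. I do not expect any genuine obstacle in this argument; the only delicate point is that positivity of $4d(G)-1$ is precisely what converts the unconditional bound $d(G)\le \tfrac14 + \tfrac{3}{4|G'|}$ into a bound on $|G'|$, and this is where the hypothesis is used.
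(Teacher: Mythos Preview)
Your argument is correct and is essentially identical to the paper's proof: both split $\Irr{G}$ into linear and nonlinear characters, use $\sum_\chi \chi(1)^2 = |G|$ together with $\chi(1)^2 \ge 4$ for nonlinear $\chi$ to obtain $|G|-[G:G'] \ge 4(k(G)-[G:G'])$, and then divide by $|G|$ and rearrange using $d(G)>1/4$. You have simply written out the final rearrangement in more detail.
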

\begin{proof}
Since $|G| = \sum_{\chi \in \Irr{G}} \chi(1)^2$ and the number of linear characters is $[G:G']$, we have
\begin{align*}
    |G| -[G:G'] = \sum_{\chi(1) > 1} \chi(1)^2 \ge 4(k(G)-[G:G']).
\end{align*}
Dividing both sides by $|G|$ and rearranging yields the claim.
\end{proof}

\begin{remark}\label{r:bound}
We will repeatedly use the fact that if the smallest degree of a nonlinear irreducible character of $G$ is $n$, then $d(G)$ and $q_h(G)$ can be bounded in terms of $n$ and~$|G'|$, with equality if all nonlinear characters have degree $n$:
\begin{align*}
    q_h(G) \le \frac{1}{|G|}\Big( [G:G']+\frac{k(G)-[G:G']}{n^{2h-2}} \Big) = \frac{1}{|G'|}\Big(1-\frac{1}{n^{2h-2}}\Big)+\frac{d(G)}{n^{2h-2}}.
\end{align*}
We will derive similar inequalites for $\til{q}_h(G)$ in the course of our proofs.
\end{remark}

We now turn to the properties of a finite group $G$ that are witnessed by the dual invariant $\til{q}_h(G)=1/|G| \sum_{C \in \mr{Cl}(G)} 1/|C|^{h-1}$, where $\mr{Cl}(G)$ is the collection of conjugacy classes of $G$. Whereas we showed that $q_h(G)$ is monotone with respect to subgroups in Lemma~\ref{l:mono}, we now prove, in a nicely dual result, that $\til{q}_h(G)$ is monotone with respect to quotients.

\begin{lemma}\label{lem:quotient}
Let $G$ be a finite group, let $N \nrm G$, and let $h$ be any positive integer. Then $\til{q}_h(G) \le \til{q}_h(G/N)$.
\end{lemma}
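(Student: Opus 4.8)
The plan is to mirror the proof of Lemma~\ref{l:mono}, interchanging the roles of irreducible characters and conjugacy classes, and of subgroups and quotients. Write $\pi \colon G \to G/N$ for the natural projection. Since $\pi$ is surjective, for each $C \in \mr{Cl}(G)$ the image $\pi(C)$ is a single conjugacy class of $G/N$, so $C \mapsto \pi(C)$ is a well-defined surjection $\mr{Cl}(G) \to \mr{Cl}(G/N)$. This partitions $\mr{Cl}(G)$ into fibers indexed by $\mr{Cl}(G/N)$, and for a fixed $\bar C \in \mr{Cl}(G/N)$ the disjoint union of the classes in the fiber over $\bar C$ is exactly the preimage $\pi^{-1}(\bar C) \subseteq G$, which is a union of $|\bar C|$ cosets of $N$ and hence has size $|N|\,|\bar C|$.

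Grouping the sum defining $\til{q}_h(G)$ according to these fibers and using $|G/N| = |G|/|N|$, it then suffices to prove, for each $\bar C \in \mr{Cl}(G/N)$, the fiberwise estimate
\begin{align*}
    \sum_{\substack{C \in \mr{Cl}(G) \\ \pi(C) = \bar C}} \frac{1}{|C|^{h-1}} \le \frac{|N|}{|\bar C|^{h-1}}.
\end{align*}

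To establish this, I would record two elementary facts about the fiber over $\bar C$. First, every class $C$ in the fiber satisfies $|C| \ge |\bar C|$: choosing $x \in C$, the centralizer $\Cen{G/N}{\pi(x)}$ contains $\pi(\Cen{G}{x}) = N\,\Cen{G}{x}/N$, so $|\bar C| = [G/N : \Cen{G/N}{\pi(x)}] \le [G : N\,\Cen{G}{x}] \le [G : \Cen{G}{x}] = |C|$. Second, the fiber contains at most $|N|$ classes, since its members are pairwise disjoint, each has size at least $|\bar C|$, and their union $\pi^{-1}(\bar C)$ has size $|N|\,|\bar C|$. Because $h \ge 1$, the first fact gives $1/|C|^{h-1} \le 1/|\bar C|^{h-1}$ for each $C$ in the fiber, and the second bounds the number of summands by $|N|$; together these yield the displayed inequality, and summing over $\bar C$ and dividing by $|G|$ completes the proof.

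There is no substantial obstacle here: both auxiliary facts are routine, and the inequality they combine to give is even cleaner than the one in Lemma~\ref{l:mono} (one need not invoke any ``maximize the sum'' heuristic). The only point requiring care is the bookkeeping — verifying that $C \mapsto \pi(C)$ is well defined on conjugacy classes and that the fiber over $\bar C$ accounts for precisely the $|N|\,|\bar C|$ elements of $\pi^{-1}(\bar C)$ — which is what makes the reduction to the fiberwise estimate legitimate.
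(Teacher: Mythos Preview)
Your proof is correct and follows essentially the same approach as the paper's: both partition $\mr{Cl}(G)$ according to the fibers of $\pi$ over $\mr{Cl}(G/N)$, establish the two facts $|C| \ge |\bar C|$ (via centralizers) and ``at most $|N|$ classes per fiber'' (via the size of $\pi^{-1}(\bar C)$), and combine them into the same fiberwise estimate. The only difference is cosmetic---the paper begins from a class $D \in \mr{Cl}(G/N)$ and decomposes its preimage, whereas you begin from the forward map $C \mapsto \pi(C)$---but the content is identical.
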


\begin{proof}
Let $\q{G}=G/N$ and use the `bar' notation. Let $\pi:G \rightarrow G/N$ denote the canonical homomorphism. Given a conjugacy class $D \in \mr{Cl}(\q{G})$, the preimage $\pi^{-1}(D) \subseteq G$ is closed under conjugation, so it is a union of conjugacy classes $C_i$ of $G$, say
\begin{align*}
    \pi^{-1}(D) = C_1 \cup C_2 \cup \cdots \cup C_{k}.
\end{align*}
Let $S(D)=\{C_1,\dots,C_k\}$. Choosing some $x_i \in C_i$ for $i=1,\dots,k$, we have $|C_i| = [G:\Cen{G}{x_i}]$. Similarly, $|D|=[\q{G}:\Cen{\q{G}}{\q{x}_i}]$ for all $i=1,\dots,k$ since $\pi(x_i) \in D$. Writing $\Cen{\q{G}}{\q{x}_i}=H/N$ for some subgroup $H \le G$, we have $|D|=[G:H]$. Note that $\Cen{G}{x_i} \le H$. Therefore,
\begin{align*}
    |C_i|=[G:\Cen{G}{x_i}]=[G:H][H:\Cen{G}{x_i}] \ge |D|.
\end{align*}
Furthermore, $|\pi^{-1}(D)|=|D||N|$ since $\pi$ is an $|N|$-to-$1$ map. Since $|C_i| \ge |D|$, we have
\begin{align*}
    |N||D| = \sum_{i=1}^{k} |C_i| \ge k |D|,
\end{align*}
and so $k \le |N|$.

Each conjugacy class of $G$ lies in the preimage of exactly one conjugacy class of $\q{G}$. Now we calculate
\begin{align*}
    \til{q}_h(G) = \frac{1}{|G|} \sum_{D \in \mr{Cl}(\q{G})} \; \sum_{C \in S(D)} \frac{1}{|C|^{h-1}} \le \frac{1}{|G|} \sum_{D \in \mr{Cl}(\q{G})} \frac{|N|}{|D|^{h-1}} = \til{q}_h(\q{G}),
\end{align*}
which is what we wanted to show.
\end{proof}

The following easy lemma is our main tool for proving the structure criteria in Theorem~\ref{t:ch-criteria}.

\begin{lemma}\label{l:c-inequality}
Let $G$ be a finite group, and let $n$ be the  smallest size of the noncentral conjugacy classes of $G$. Let $h\ge 2$ be an integer, and assume that $G$ is not abelian. Let $N$ be an integer such that $d(G)/\til{q}_h(G) \le N$. Then one of the following occurs:
\begin{enumerate}
    \item $n^{h-1} \le N$; or
    \item $[G:\Zen{G}] \leq  \dfrac{N}{\til{q}_h(G) (N+1)-d(G)} \le \dfrac{d(G)}{\til{q}_h(G)^2}$.
\end{enumerate}
\end{lemma}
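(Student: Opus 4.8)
The plan is to isolate the contribution of the central conjugacy classes to $\til{q}_h(G)$. Write $z=|\Zen{G}|$; since $G$ is nonabelian, $n$ is well-defined, and $G$ has exactly $z$ classes of size $1$ together with $k(G)-z$ noncentral classes, each of size at least $n$. As $h\ge 2$, every noncentral class $C$ satisfies $1/|C|^{h-1}\le 1/n^{h-1}$, so
\begin{align*}
    |G|\,\til{q}_h(G)=z+\sum_{C\text{ noncentral}}\frac{1}{|C|^{h-1}}\le z+\frac{k(G)-z}{n^{h-1}}.
\end{align*}

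Next I would split into the two cases of the statement. If $n^{h-1}\le N$, conclusion (1) holds and we are done, so assume $n^{h-1}>N$. Since $n^{h-1}$ and $N$ are integers, this gives $n^{h-1}\ge N+1$, and substituting this into the bound above only weakens it. Clearing the denominator $N+1$ and using $k(G)=|G|\,d(G)$, I would rearrange to isolate $z$, arriving at
\begin{align*}
    Nz\ge |G|\bigl((N+1)\til{q}_h(G)-d(G)\bigr),
\end{align*}
so that $[G:\Zen{G}]=|G|/z\le N/\bigl((N+1)\til{q}_h(G)-d(G)\bigr)$. The denominator is strictly positive: the hypothesis $N\ge d(G)/\til{q}_h(G)$ yields $N\til{q}_h(G)\ge d(G)$, hence $(N+1)\til{q}_h(G)-d(G)\ge\til{q}_h(G)>0$. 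This establishes the first bound in (2).

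For the second bound in (2), I would cross-multiply (permissible since $\til{q}_h(G)^2>0$ and $(N+1)\til{q}_h(G)-d(G)>0$): the claimed inequality $N/\bigl((N+1)\til{q}_h(G)-d(G)\bigr)\le d(G)/\til{q}_h(G)^2$ simplifies, after expanding, to $(\til{q}_h(G)-d(G))(N\til{q}_h(G)-d(G))\le 0$. Here $N\til{q}_h(G)-d(G)\ge 0$ by hypothesis, while $\til{q}_h(G)-d(G)=\til{q}_h(G)-\til{q}_1(G)\le 0$ by the monotonicity $\til{q}_h(G)\ge\til{q}_{h+1}(G)$ recorded in the Introduction; so the product is nonpositive, as required.

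The argument is essentially a computation; the only genuine obstacle is organizational — one must make the central/noncentral split first, and in the final step take care with signs, since the factor $\til{q}_h(G)-d(G)$ is nonpositive, so the direction of the equivalence must be tracked carefully. I do not expect any deeper difficulty.
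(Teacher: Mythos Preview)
Your proof is correct and follows essentially the same approach as the paper: both begin with the central/noncentral split $\til{q}_h(G)\le\frac{1}{|G|}\bigl(|\Zen{G}|+(k(G)-|\Zen{G}|)/n^{h-1}\bigr)$ and then use $n^{h-1}\ge N+1$. The only cosmetic difference is that the paper first rearranges to $[G:\Zen{G}]\le (n^{h-1}-1)/(\til{q}_h(G)n^{h-1}-d(G))$ and invokes monotonicity of this rational function in $n^{h-1}$, whereas you substitute $N+1$ earlier and verify the second bound by factoring $(\til{q}_h(G)-d(G))(N\til{q}_h(G)-d(G))\le 0$ directly.
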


\begin{proof} Since $G$ is not abelian, $\til{q}_h(G) < d(G)$.
If $n^{h-1}\leq N$, then part (1) is satisfied. Assume that $n^{h-1} > N$.  So $n^{h-1}>N\ge d(G)/\til{q}_h(G)$. It follows that \mbox{$\til{q}_h(G)n^{h-1}-d(G)>0$} and \[n^{h-1} \ge N+1 \ge  d(G)/\til{q}_h(G)+1.\] We have
\begin{align*}
    \til{q}_h(G) \le \frac{1}{|G|} \Big( |\Zen{G}| + \frac{k(G)-|\Zen{G}|}{n^{h-1}} \Big).
\end{align*}
Using that $d(G)=k(G)/|G|$ and rearranging, we find that
\begin{align}\label{eq:ch-inequality}
    [G:\Zen{G}] \leq  \frac{n^{h-1}-1}{\til{q}_h(G) n^{h-1}-d(G)}.
\end{align}
Since the right-hand side of Equation~(\ref{eq:ch-inequality}) is a decreasing function of $n^{h-1}$, we have
\begin{align*}
    [G:\Zen{G}] \leq  \frac{N}{\til{q}_h(G)(N+1)-d(G)} \le \frac{d(G)}{\til{q}_h(G)^2},
\end{align*}
as claimed.
\end{proof}


\section{Commutativity, nilpotency, supersolvability and solvability criteria.}\label{s:criteria}

In this section, we prove Theorems \ref{t:qh-criteria}(a)--(d). We begin with the commutativity criterion of Theorem \ref{t:qh-criteria}(a).


\vspace{1em}
\noindent \emph{Proof of Theorem \ref{t:qh-criteria}(a).}
Suppose for contradiction that $G$ is a counterexample to the claim with minimal order. The degrees of the complex irreducible characters of $\mr{D}_8$ are 1, 1, 1, 1 and~2. By a result due to Gustafson \cite{G73}, $d(G) \le 5/8$ since $G$ is nonabelian. Altogether, we have
\begin{align*}
    \frac12 < \frac12 \big(1+\frac{1}{2^{2h}}\big) = q_h(\mr{D}_8) < q_h(G) \le d(G) \le \frac58.
\end{align*}

We first claim that $G$ is nilpotent; that is, all Sylow subgroups of $G$ are normal in~$G$. By Lemma \ref{l:dG}(c), if a Sylow $p$-subgroup of $G$ is not normal for some prime $p$, then $d(G) \le 1/p$. But then $1/2 < 1/p$, which is impossible.

Next, we claim that all Sylow subgroups of odd order are abelian. Note that if \mbox{$G=A \times B$}, then $q_h(G) = q_h(A)q_h(B)$. Since $G$ is nilpotent by the last paragraph, this means that \mbox{$q_h(G)=\prod q_h(P)$}, where the product runs over the set of Sylow subgroups of~$G$. In particular, $q_h(P)>1/2$ for every Sylow subgroup $P$. However, by Lemma \ref{l:dG}(d), if $P$ is nonabelian then $d(P) < 1/p+1/p^2$. Thus, $1/p+1/p^2>1/2$. This inequality only holds if $p=2$, so all Sylow subgroups of odd order are abelian.

We may now assume that $G$ is a nonabelian $2$-group such that $q_h(G)>q_h(\mr{D}_8)$. Now, by Lemma \ref{l:dG}(e), either $d(G)<1/2$ or $|G'|=2$. We have $d(G)>1/2$ by the first paragraph, so $|G'|=2$. Therefore,
\begin{align*}
    q_h(\mr{D}_8) < q_h(G) 
    \le \frac12\Big(1-\frac{1}{2^{2h-2}} \Big) + \frac{d(G)}{2^{2h-2}},
\end{align*}
where we used that nonlinear irreducible characters of $G$ have degree at least $2$ and the observation in Remark~\ref{r:bound}. Thus,
\begin{align*}
    d(G) > \frac12 + 2^{2h-2}\Big(q_h(\mr{D}_8)-\frac12\Big) 
    = \frac12 + 2^{2h-2}\Big(\frac{1}{2^{2h+1}}\Big)
    = \frac12 + \frac18 = \frac58.
\end{align*}
But this contradicts the fact that $d(G) \le 5/8$ from the first paragraph, so the theorem is proved.
\qed

\vspace{1em}
Next, we prove the nilpotency criterion of Theorem \ref{t:qh-criteria}(b). We will show that a minimal counterexample is a so-called minimal non-nilpotent group; that is, a non-nilpotent group whose proper subgroups are nilpotent. The structure of such groups, which we summarize in the following lemma, is well understood:

\begin{lemma}{\cite[Theorem~III.5.2]{Hu67}}\label{l:nonnil}
Suppose $G$ is a minimal non-nilpotent group. Then $G \cong Q \rtimes P$, where $Q$ is a Sylow $q$-subgroup for some prime~$q$, 
while $P$ is a cyclic Sylow $p$-subgroup for some prime $p$ and $\Phi(P) \le \Zen{G}$.
\end{lemma}


\noindent \emph{Proof of Theorem \ref{t:qh-criteria}(b).}
Suppose for contradiction that $G$ is a counterexample to the claim with minimal order. The irreducible character degrees of $\mr{S}_3$ are 1, 1 and~2.
Since $G$ is not nilpotent, it contains a non-normal Sylow $p$-subgroup for some prime $p$ dividing~$|G|$. This means $d(G) \le 1/p \le 1/2$ by Lemma~\ref{l:dG}(c). Altogether, we have
\begin{align*}
    \frac13 < \frac13 \big(1+\frac{1}{2^{2h-1}}\big) = q_h(\mr{S}_3) < q_h(G) \le d(G) \le \frac1p \le \frac12.
\end{align*}
In particular, this means that the only non-normal Sylow subgroup is the Sylow $2$-subgroup. 

Now, by the monotonicity of Lemma~\ref{l:mono}, every proper subgroup of $G$ is nilpotent, so $G$ is a minimal non-nilpotent group by induction and has the structure described in Lemma~\ref{l:nonnil}. We know that $p=2$ by the last paragraph, and $q \neq 2$ since $G$ is not nilpotent. As $P$ is cyclic, $[P:\Phi(P)] = 2$.


Now consider the structure of $G'$: As $G$ is not nilpotent, of course $G'>1$. As \mbox{$d(G)>1/3$}, we have $|G'|<9$ by Lemma~\ref{l:G'}. Since $G/Q \cong P$ is abelian, $G' \le Q$. So $q=3$, $5$ or $7$ and $G'$ is cyclic of prime order $q$. 
Since $G' \nrm Q$, we have $G' \cap \Zen{Q} >1$, so $G' \le \Zen{Q}$ as $G'$ has prime order. So $Q$ centralizes $G'$. 
If $G'<Q$, then $G' P < G$ is nilpotent by induction on the group order, so $P$ centralizes $G'$, as well. But then $G' \le \Zen{G}$ and $G$ is nilpotent, a contradiction. So we have that $G'=Q$ is cyclic of order $3$, $5$ or $7$.

As $\Phi(P) \le \Zen{G}$, the product $Q \Phi(P)$ is abelian of index $2$ in $G$. So every nonlinear irreducible character of $G$ has degree $2$ since it is a constituent of a character induced from a (necessarily linear) character of $Q \Phi(P)$. As~$G$ has $[G:G']=|P|$ linear characters, the remaining nonlinear characters must be $(|Q|-1)|P|/4$ in number since the resulting sum of squared degrees is $|P| + \frac14 (|Q|-1)|P|\cdot 2^2 = |G|$. Therefore,
\begin{align*}
    q_h(G) = \frac{1}{|G|} \bigg( |P| + \frac{(|Q|-1)|P|}{4} \frac{1}{2^{2h-2}} \bigg) 
    = \frac{1}{q} \big( 1 + \frac{q-1}{2^{2h}} \big) > \frac13 \big(1 + \frac{2}{2^{2h}} \big) = q_h(\mr{S}_3).
\end{align*} 
But the inequality implies that $q<3$. We have reached our final contradiction and proved the claim.
\qed

\vspace{1em}

Now we prove the supersolvability criterion of Theorem \ref{t:qh-criteria}(c). We will show that a minimal counterexample is a so-called minimal non-supersolvable group; that is, a non-supersolvable group whose proper subgroups are supersolvable. The structure of such groups was determined by Doerk \cite{Doerk}, and can be read off from \cite[Theorem~12]{BE} and \cite[Chapter VI, Exercise~16]{Hu67}. We summarize the results we will initially need in the following lemma.

\begin{lemma}{\cite{Doerk}}\label{l:nonsol}
Suppose $G$ is a minimal non-supersolvable group. Then $G$ has exactly one normal Sylow $p$-subgroup $P$ and a complement $Q$ such that $|Q|$ is divisible by at most two distinct primes. Furthermore, $P/\Phi(P)$ is a minimal normal subgroup of $G/\Phi(P)$ and is not cyclic, and $Q\cap \Cen{G}{P/\Phi(P)}=Q\cap \Phi(G) = \Phi(Q) \cap \Phi(G)$.
\end{lemma}


\noindent \emph{Proof of Theorem \ref{t:qh-criteria}(c).}
Let $G$ be a counterexample to the theorem with minimal order. The irreducible character degrees of $\mr{A}_4$ are $1,1,1$ and $3$. Since $G$ is not supersolvable, $d(G) \le 1/3$ by \cite{BMN}. Altogether, we have
    \begin{align*}
    \frac14 < \frac14 (1+\frac{1}{3^{2h-1}}) = q_h(\mr{A}_4) < q_h(G) \le d(G) \le \frac13.
    \end{align*}
Then by Lemma \ref{l:mono}, for every proper subgroup $H$ of $G$, we have $q_h(H)\ge q_h(G)>q_h(\mr{A}_4)$ and hence by the minimality of $|G|$, $H$ is supersolvable. As $G$ is not supersolvable, $G$ is a minimal non-supersolvable group and has the structure described in Lemma~\ref{l:nonsol}. Now we work in $\q{G} = G/\Phi(G)$, which is again minimal non-supersolvable, since if $\q{G}$ is supersolvable then $G$ is supersolvable by \cite[Theorem~VI.8.5]{Hu67}. Recall that $\Phi(P) \le \Phi(G)$ by \cite[Lemma~III.3.3]{Hu67}; since $P/\Phi(P)$ is minimal normal in $G/\Phi(P)$ by Lemma~\ref{l:nonsol}, $\Phi(P)=P \cap \Phi(G)$. Therefore, $\q{P}$ is a minimal normal, elementary abelian, noncyclic subgroup of~$\q{G}$, where $\q{G}=\q{P}\q{Q}$ and $\q{Q}$ acts coprimely, faithfully and irreducibly on $\q{P}$. By Brauer's $k(GV)$-theorem (\cite[Theorem]{GMRS}), we know that $k(\q{G}) =k(\q{P} \rtimes \q{Q}) \le |\q{P}|$. Using this and Lemma~\ref{l:dG}(f), we have
\begin{align*}
    \frac14 < d(G) \le d(\q{G}) \le \frac{1}{|\q{Q}|},
\end{align*}
and so $|\q{Q}| \le 3$. Since $\q{Q}$ has prime order and $\q{P}$ is an irreducible $\q{Q}$-module, this means $\Cen{\q{G}}{\q{x}} \le \q{P}$ for all nontrivial $\q{x} \in \q{P}$, so $\q{G}$ is a Frobenius group. Now, if $\q{Q}=\gen{\q{t}}$ has order~$2$, then $\q{t}$ acts as inversion on~$\q{P}$ (see \cite[Lemma~7.21]{I94} and its proof). So every subgroup $\gen{\q{x}} \le \q{P}$ is fixed by $\q{Q}$, which means $\q{P}=\gen{\q{x}}$ is cyclic as it is an irreducible $\q{Q}$-module. This contradiction implies that $|\q{Q}|=3$. 

Now we count the characters of the Frobenius group $\q{G}$: Since $\q{G}'=\q{P}$, there are $[\q{G}:\q{G}']=|\q{Q}|$ linear characters, and each of the nonlinear irreducible characters of $\q{G}$ is induced from an irreducible character of $\q{P}$ lying in one of the $(|\q{P}|-1)/|\q{Q}|$ orbits under~$\q{Q}$. So
\begin{align*}
    \frac14 < d(\q{G}) = \frac{1}{|\q{P}||\q{Q}|}\bigg(|\q{Q}| + \frac{|\q{P}|-1}{|\q{Q}|}\bigg).
\end{align*}
Rearranging and using that $|\q{Q}|=3$, we find that $p^2 \le |\q{P}| < 32/5 < 7$. Therefore, $|\q{P}|=4$ and so $\q{G} \cong \mr{A}_4$. In particular, $P$ is a $2$-group.

Let $r$ be a prime divisor of $|G|$, and let $R$ be a Sylow $r$-subgroup of $G$. If $R$ is not normal in~$G$, then $d(G)\leq \frac1r.$ Since $d(G)\ge q_h(G) > q_h(\textrm{A}_4) > \frac14$ we deduce that $\frac1r>\frac14$ which forces $r<4$ and hence $r\in \{2,3\}$. In particular, $Q$ is a $\pi$-group, where $\pi\subseteq \{2,3\}$. Since we showed that the Sylow $p$-subgroup $P$ is a $2$-group, $Q$ must then be a $3$-group. Now, since $\Phi(Q) \ge \Phi(Q) \cap \Phi(G)=Q \cap \Cen{G}{P/\Phi(P)}$ and $\q{Q} \cong Q/(\Phi(Q) \cap \Phi(G))$ is cyclic of order $3$, we have $\Phi(Q)=\Phi(Q) \cap \Phi(G)$. So $Q$ is itself cyclic by Burnside's basis theorem \cite[Theorem~III.3.15]{Hu67}. We have shown that $G=PQ$, where $P$ is a $2$-group and $Q$ is a cyclic $3$-group, and $\q{G}=G/\Phi(G) \cong \mr{A}_4$.

Now we turn to the more precise classification of minimal non-supersolvable groups in \cite[Theorem~10]{BE}. Since $Q$ is cyclic of $3$-power order and $p=2$, inspecting the list of possibilities in \cite[Theorem~9]{BE} shows that only Types 2 and 3 are possible. We inspect each of these in turn and derive contradictions:

Type 2: \emph{$Q=\gen{z}$ is cyclic, $P$ is an irreducible $Q$-module over the field of $p$ elements with kernel $\gen{z^q}$ in $Q$.}

Since $P \nrm G$ is an irreducible $Q$-module, $P$ has no $Q$-invariant nontrivial proper subgroup. In particular, $P \cap \Phi(G) = 1$ and so $\q{P} \cong P$ is elementary abelian of order~$4$ by what we showed above. The kernel $K=\gen{z^3}$ is central in $G$ and has index $3$ in~$Q$, so $|Q|=3|K|$ and $|G|=12|K|$. We have $[G:G']=[G:P]=|Q|=3|K|$, and the nonlinear irreducible characters of $G$ are induced from characters $\psi_1 \times \psi_2 \in \Irr{P \times K}$ with $\psi_1 \neq 1$, which split into $|K|$ orbits under~$G$. Thus, each of these $|K|$ nonlinear characters of $G$ has degree $3$. So 
\begin{align*}
    q_h(G) = \frac{1}{12|K|}\bigg(3|K| + \frac{|K|}{3^{2h-2}} \bigg) = \frac14 \bigg(1 + \frac{1}{3^{2h-1}} \bigg) = q_h(\mr{A}_4),
\end{align*}
which is a contradiction. Note that if $K=1$, then $G \cong \mr{A}_4$.

Type 3: \emph{$P$ is a nonabelian special $p$-group of rank $2m$, the order of $p$ modulo $q$ being~$2m$, $Q=\gen{z}$ is cyclic, $z$ induces an automorphism on $P$ such that $P/\Phi(P)$ is a faithful and irreducible $Q$-module, and $z$ centralizes $\Phi(P)$. Furthermore, $|P/\Phi(P)|=p^{2m}$ and $|P'|\le p^m$.}

Since $|\q{P}|=4$, we have $m=1$. So $|P'| \le 2$, which means $P$ is an extraspecial $2$-group of order~$2^3$. Since $P/\Phi(P)$ is a faithful $Q$-module, $Q$ is cyclic of order $3$. If $P$ is an extraspecial $2$-group of order $2^3$, then $P$ has $4$ linear characters and $1$ nonlinear irreducible character of degree $2$. The nontrivial linear characters form one $Q$-orbit and induce irreducibly to the same nonlinear character of $G$ of degree $3$. The unique nonlinear character of $P$ is necessarily $G$-invariant, and so extends to $G$ since $[G:P]=3$ is prime (see \cite[Corollary~6.19]{I94}). This gives rise to $|\Irr{G/P}|=3$ distinct irreducible characters of $G$ of degree $2$ by \cite[Corollary~6.17]{I94}. There are $[G:G']=[G:P]=3$ linear characters. So
\begin{align*}
    q_h(G) = \frac{1}{24}\bigg( 3 + \frac{3}{2^{2h-2}} + \frac{1}{3^{2h-2}} \bigg) \le \frac{1}{24} \bigg(6 + \frac{1}{3^{2h-2}} \bigg) = \frac{1}{4} \bigg(1 + \frac{1}{2 \cdot 3^{2h-1}} \bigg) < q_h(\mr{A}_4),
\end{align*}
a contradiction. This is our final contradiction and the theorem is proved.
\qed

\vspace{1em}
Now we prove the solvability criterion of Theorem~\ref{t:qh-criteria}(d).
\vspace{1em}


\noindent \emph{Proof of Theorem \ref{t:qh-criteria}(d).} If $h=1$, then $q_1(G) = d(G) > d(\mr{A}_5) = 1/12$, and $G$ is already known to be solvable by a result of Dixon \cite{D73}. So, now assume that $h \ge 2$, and suppose that $G$ is a counterexample to the claim with minimal order. By Lemma \ref{l:dG}(a), we have \mbox{$d(G) \ge q_h(G) > q_h(\mr{A}_5) > 1/|\mr{A}_5|=1/60$}.

We first note that $q_h(G') \ge q_h(G)$ by Lemma \ref{l:mono}. So if $G' <G$, then $G'$ is solvable by induction on the order of the group. But then $G$ is also solvable, a contradiction. So $G=G'$ is perfect and nontrivial irreducible characters have degree at least $2$. Thus,
\begin{align*}
    q_h(G) 
    \le \frac{1}{|G|} \bigg(1+\frac{k(G)-1}{2^{2h-2}} \bigg) 
    =\frac{1}{|G|} \bigg( \frac{k(G)+2^{2h-2}-1}{2^{2h-2}}\bigg).
\end{align*}

Now note that we have the following inequalities when $k(G) \ge 5$:
\begin{align*}
    \frac{k(G)+2^{2h-2}-1}{2^{2h-2}} &\le \frac{2 k(G)}{5} \qquad \text{for $h=2$}, \\
    \frac{k(G)+2^{2h-2}-1}{2^{2h-2}} &\le \frac{k(G)}{3} \qquad \text{for $h>2$.}
\end{align*}
It is not difficult to see that if $k(G) < 5$, then $|G| \le 12$, and so $G$ is solvable (see \cite[Note~A]{B55} or \cite[Table 1]{L85}). Since $G$ is nonsolvable, we must have $k(G) \ge 5$. Therefore, we have
\begin{align*}
    d(G) = \frac{k(G)}{|G|} &\ge \frac{5}{2} q_2(G) > \frac{5}{2} q_2(\mr{A}_5) = \frac{5}{2} \frac{4769}{216000} > \frac{1}{20}, \\
    d(G) = \frac{k(G)}{|G|} &\ge 3 q_h(G) > 3 q_h(\mr{A}_5) > \frac{3}{60} = \frac{1}{20} \qquad \text{for $h >2$}.
\end{align*}
Therefore, $G$ is perfect with $d(G)>1/20$ for all $h \ge 2$. Then by Lemma \ref{l:dG}(b), $G \cong \mr{A}_5$ or $G \cong \mr{SL}_2(5)$. But
\begin{align*}
    q_h(\mr{SL}_2(5)) &= \frac{1}{120} \bigg(1 + \frac{2}{2^{2h-2}} + \frac{2}{3^{2h-2}} + \frac{2}{4^{2h-2}}+ \frac{1}{5^{2h-2}} + \frac{1}{6^{2h-2}} \bigg) \\
    &< \frac{1}{120} \bigg(2 + \frac{4}{3^{2h-2}} + \frac{2}{4^{2h-2}} + \frac{2}{5^{2h-2}} \bigg) =q_h(\mr{A}_5).
\end{align*}
This is our final contradiction, and the theorem is proved. \qed

\vspace{1em}

Next, we prove Theorem \ref{t:plocal} in a manner very similar to the proof of Theorem \ref{t:qh-criteria}(d). Although it may be true that $q_{h,p'}(G)$ is monotone with respect to subgroups, the proof of Lemma~\ref{l:mono} will not work when the prime $p$ divides $|G|$ as Frobenius reciprocity does not hold in this case. However, we can again show that a minimal counterexample to the theorem must be perfect by using some basic results from the theory of Brauer characters. Before we begin, recall that we defined $\alpha(h,p) = (2^{2h-2}+\sqrt{p-1})/(2^{2h-2}\sqrt{p-1})$.


\vspace{1em}
\noindent \emph{Proof of Theorem \ref{t:plocal}.} Suppose for contradiction that $G$ is a counterexample to the claim with minimal order. We first show that if $N \le G$ is a normal subgroup with index equal to any prime $r$, then $q_{h,p'}(G) \le q_{h,p'}(N)$. This will then imply that $G$ is perfect, since if not, then there exists such an $N \nrm G$ with $\alpha(h,p)/(p-1) < q_{h,p'}(G) \le q_{h,p'}(N)$, so $N$ is $p$-solvable by induction on the order of the group. But then $G$ is also $p$-solvable since the quotient $G/N$ is cyclic, which is a contradiction. 

Now, each character $\varphi \in \IBr{G}$ lies above at least one character $\theta \in \IBr{N}$. Denoting the collection of irreducible $p$-Brauer characters of $G$ lying over $\theta \in \IBr{N}$ by $\IBr{G|\theta}$, we have
\begin{align*}
    q_{h,p'}(G) \le \frac{1}{|G|_{p'}} \sum_{\theta \in \IBr{N}} \sum_{\varphi \in \IBr{G|\theta}} \frac{1}{\varphi(1)^{2h-2}}.
\end{align*}
Since $N \nrm G$ has prime index $r$, there are two possibilities for each $\theta \in \IBr{N}$: 

(1) $\theta$ is $G$-invariant, in which case $\theta$ extends to an irreducible Brauer character $\hat{\theta}$ of $G$ by \cite[Theorem~8.12]{N98} since $G/N$ is cyclic. In addition, the characters $\beta \hat{\theta}$ for $\beta \in \IBr{G/N}$ are all the irreducible constituents of $\theta^G$ by \cite[Corollary~8.20]{N98}. Note that $\beta(1)\hat{\theta}(1)=\theta(1)$ since $\beta$ is linear. Finally, since $N \nrm G$, $\varphi \in \IBr{G}$ is an irreducible constituent of $\theta^G$ if and only if $\theta$ is an irreducible constituent of $\varphi_N$ by \cite[Corollary~8.7]{N98}. Therefore, \mbox{$|\IBr{G|\theta}|=|\IBr{G/N}|=[G:N]_{p'}$}. 

(2) The stabilizer of $\theta$ in $G$ is $N$, in which case the induced character $\varphi = \theta^G$ is irreducible by Clifford's theorem \cite[Theorem 8.9]{N98}. So $|\IBr{G|\theta}|=1$, again by \cite[Corollary~8.7]{N98}, and $\varphi(1) > \theta(1)$. 

Therefore, for each $\theta \in \IBr{N}$ we have
\begin{align*}
    \sum_{\varphi \in \IBr{G|\theta}} \frac{1}{\varphi(1)^{2h-2}} \le \frac{[G:N]_{p'}}{\theta(1)^{2h-2}},
\end{align*}
and so
\begin{align*}
    q_{h,p'}(G) \le \frac{[G:N]_{p'}}{|G|_{p'}} \sum_{\theta \in \IBr{N}} \frac{1}{\theta(1)^{2h-2}}  = q_{h,p'}(N),
\end{align*}
which is what we wanted to show.

Since we now know that the counterexample $G$ is perfect, all nontrivial irreducible Brauer characters are nonlinear, and we have
\begin{align*}
    q_{h,p'}(G) 
    \le \frac{1}{|G|_{p'}} \bigg(1+\frac{k_{p'}(G)-1}{2^{2h-2}} \bigg) 
    =\frac{1}{|G|_{p'}} \bigg( \frac{k_{p'}(G)+2^{2h-2}-1}{2^{2h-2}}\bigg),
\end{align*}
where the inequality follows since $\varphi(1) \ge 2$ for all nonlinear $\varphi \in \IBr{G}$.

Now, since $G$ is non-$p$-solvable, it is known that $k_{p'}(G) > \sqrt{p-1}$ by \cite[Theorem~1.2]{NM22}. Note that the following inequality is satisfied for $k_{p'}(G) > \sqrt{p-1}$ and each $h \ge 1$:
\begin{align*}
    \frac{k_{p'}(G)+2^{2h-2}-1}{2^{2h-2}} < \alpha(h,p) k_{p'}(G).
\end{align*}
To see this, multiply through by $2^{2h-2}$, move all terms involving $k_{p'}(G)$ to the right-hand side, and use that $k_{p'}(G) > \sqrt{p-1}$. This implies that
\begin{align*}
    d_{p'}(G) = \frac{k_{p'}(G)}{|G|_{p'}} > \frac{1}{\alpha(h,p)} q_{h,p'}(G) > \frac{1}{p-1}.
\end{align*}
But this contradicts \cite[Theorem~1.1]{S24}, and the theorem is proved.
\qed

\section{Criterion for the existence of normal Sylow subgroups}\label{sec:p-closed}

In this section, we prove Theorem~\ref{t:pclosed}, which is a criterion for the existence of a normal Sylow subgroup. Let $p$ be a prime, and let $h$ be a positive integer. Recall that for a finite group $G$ and a prime~$p$, $G$ is said to be $p$-closed if $G$ has a normal Sylow $p$-subgroup. Define 
\begin{align*}
    \gamma(h,p)=\frac{1}{p+1}\Big(1+\frac{1}{p^{2h-1}}\Big).
\end{align*}
Note that $\gamma(h,p)>{1}/{(p+1)}$. We now prove Theorem~\ref{t:pclosed}.

\medskip
\emph{Proof of Theorem~\ref{t:pclosed}.} Let $G$ be a counterexample to the theorem with minimal order. Then $q_h(G)>\gamma(h,p)$ but $G$ does not have a normal Sylow $p$-subgroup. By Lemma~\ref{l:dG}(c), we have $d(G)\leq 1/p.$ Consequently, we obtain the following inequalities:   
\begin{equation*}\label{eqn1} 
\frac{1}{p}\ge d(G)\ge q_h(G)>\gamma(h,p)>\frac{1}{p+1}.
\end{equation*}
By Lemma \ref{l:mono}, for each proper subgroup $H$ of $G$, we have $q_h(H)\ge q_h(G)>\gamma(h,p)$ and thus by the minimality of $|G|$, $H$ is $p$-closed. In particular, every proper subgroup of $G$ is $p$-closed. Also, if $N$ is a normal subgroup of $G$, then every proper subgroup of the quotient group $G/N$ is $p$-closed. 

Assume that $p=2$. Then $\gamma(h,p)=q_h(\textrm{S}_3)$ and since $q_h(G)>q_h(\textrm{S}_3)$, $G$ is nilpotent by Theorem~\ref{t:qh-criteria}(b). But then $G$ has a normal Sylow $2$-subgroup, a contradiction. Therefore, we may assume that $p\ge 3.$ Let $P$ be a Sylow $p$-subgroup of $G$.

\medskip
\textbf{Claim $1$}: \emph{$\langle P^G\rangle=G$. Hence, $G$ has no proper normal subgroup of $p'$-index.}

Let $L=\langle P^G\rangle.$  Then $P\leq L\unlhd G.$ Suppose that $L<G$. By the minimality of $|G|$, $L$ has a normal Sylow $p$-subgroup, which implies that $P\unlhd L.$ Since $P$ is characteristic in $L$, we deduce that $P\unlhd G$, which is a contradiction. Therefore, $G=L$ as desired.

\medskip
\textbf{Claim $2$}: \emph{$G$ is not a nonabelian simple group.}  

Since $d(G)>1/(p+1)$, the result follows from Lemma \ref{lem1}.

\medskip
\textbf{Claim $3$}: \emph{Let $N$ be a maximal normal subgroup of $G$. Then $G=\langle x\rangle N$ with $[G:N]=p$, where $N$ has a normal Sylow $p$-subgroup, which is $O_p(G)$, and $x\in P$ is a $p$-element and $P=\langle x\rangle O_p(G)$. In particular, $G$ is $p$-solvable with a Hall $p'$-subgroup $R$ and $G=PR$ with $[P:O_p(G)]=p$.}

Since $G$ is not a simple group, $G$ has a maximal normal subgroup, say $N$. It follows that $N$ has a normal Sylow $p$-subgroup, say $P_1$. As $P_1\unlhd N\unlhd G$, we deduce that $P_1\unlhd G$ and thus $P_1\leq O_p(G)$. Since $G/N$ is a simple group and $d(G/N)\ge d(G)>1/(p+1)$ and $p$ divides $|G/N|$ by Claim $1$, Lemma \ref{lem1} yields that $G/N$ is abelian. Hence, $G/N$ is cyclic of order~$p$. 
It follows that $G=PN$ and $P\cap N=P_1\unlhd G.$ As $[G:N]=[P:P_1]=p$ and $P$ is not normal in $G$, we deduce that $P_1=O_p(G)$. Let $x\in P\setminus P_1$. Then $P=\langle x\rangle P_1=\langle x\rangle  O_p(G)$. Since $N/O_p(G)$ is a $p'$-group and both $O_p(G)$ and $G/N$ are $p$-groups, $G$ is $p$-solvable. So $G$ has a Hall $p'$-subgroup $R$ and thus $N=RO_p(G)$ and $G=PN=PR$ with $[G:N]=[P:O_p(G)]=p.$

\medskip
\textbf{Claim $4$}: \emph{$R$ is a Sylow $r$-group of $G$ for some prime $r$.}

Let $\overline{G}=G/O_p(G)$. Then $\overline{P}\cong C_p$ acts coprimely and nontrivially on $\overline{N} = \overline{R}\cong R$. Suppose by contradiction that $|\overline{R}|=|R|$ is divisible by at least two distinct primes. Let $q$ be a prime divisor of $|R|$. By a coprime action theorem (\cite[Theorem~3.23]{I08}), $\overline{P}$ stabilizes a Sylow $q$-subgroup $\overline{Q}$ of~$\overline{R}$. But then $\overline{P}\overline{Q}$ is a proper subgroup of~$\overline{G}$, and hence it has a normal Sylow $p$-subgroup. Thus, $\overline{P}$ centralizes~$\overline{Q}$. 
It follows that $|\overline{G}:\Cen{\overline{G}}{\overline{P}}|$ is not divisible by $q$. Clearly, this index is also not divisible by $p$. As this is true for all prime divisors of $|R|$, we must have $\overline{G}=\Cen{\overline{G}}{\overline{P}}$, which is a contradiction. Thus $R$ is an $r$-group for some prime $r$, and so it is a Sylow $r$-subgroup of $G$.

\medskip
\textbf{Claim $5$}: \emph{Let $n$ be the smallest nontrivial character degree of $G$. Then $n\ge (p-1)/2$.}

Note that $G$ is nonabelian and so such a minimal nontrivial degree $n$ exists. Let $\chi\in\textrm{Irr}(G)$ such that $\chi(1)=n$ and let $K$ be the kernel of $\chi.$ Then $K$ is a proper  normal subgroup of $G$. We consider the following cases.

Case 1: $G/K$ has a normal Sylow $p$-subgroup.  Then $PK/K\unlhd G/K$ as $PK/K$ is a Sylow $p$-subgroup of $G/K$. By Claim $1$,  we have $G=PK$ and thus $G/K\cong P/P\cap K$, which is a $p$-group. Since $\chi\in\Irr {G/K}$
is nonlinear, $G/K$ is a nonabelian $p$-group and thus $n=\chi(1)\ge p>(p-1)/2$.

Case 2: $G/K$ does not have a normal Sylow $p$-subgroup.
It follows that $G/K$ has a faithful irreducible character of degree $n$ and does not have a normal Sylow $p$-subgroup. By \cite[Theorem~1]{FT}, $p\leq 2n+1$ or $n\ge (p-1)/2$.

\medskip
\textbf{Claim $6$}: \emph{$G'=RU$, where $U=G'\cap O_p(G)\unlhd G$.}

Note that $G$ is a $\{p,r\}$-group by Claim $4$, so it is solvable and $G'\leq N\unlhd G.$ Since $\langle P^G\rangle=G$, we deduce that $G/G'$ is an abelian $p$-group. Thus, $G'$ contains a Sylow $r$-group of $G$. It follows that $R\leq G'.$ By Dedekind's modular law, $G'=R(G'\cap O_p(G))$, where $G'\cap O_p(G)$ is a normal Sylow $p$-subgroup of $G'$.

\medskip
\textbf{Claim $7$}: \emph{The quotient $G/R'O_p(G)$ is a Frobenius group with a cyclic complement of order~$p$ and Frobenius kernel an 
abelian $2$-group of order $2^m$ for some $m\ge 2$, and \mbox{$p=2^m-1$.}}

Note that $N/O_p(G)=RO_p(G)/O_p(G)\unlhd G/O_p(G)$ and thus $(N/O_p(G))' = R'O_p(G)/O_p(G)$. Hence, $R'O_p(G)\unlhd G.$  Let $\overline{G}=G/R'O_p(G)$.
Then $O_p(\overline{G})=1$, $\overline{R}\unlhd \overline{G}$, $\overline{G}'=\overline{R}$ and $\overline{G}=\langle \overline{x}\rangle \overline{R}$
with $|\overline{x}|=p.$  Moreover, $\overline{R}$ is an  abelian $r$-group and $|\overline{G}'|=|\overline{R}|=r^m$ for some positive integer~$m$. It follows that $p$ is the only nontrivial character degree of $G$. Hence,
\begin{align*}
    d(\overline{G}) = \frac{1}{p^2}+(1-\frac{1}{p^2})\frac{1}{|\overline{G}'|}
\end{align*}
by \cite[Lemma~2(vi)]{G06}. Since $d(\overline{G})>1/(p+1)$, we deduce that \[r^m<\frac{(p+1)(p^2-1)}{p^2-p-1}.\] Since $p\ge 3,$ we can check that \[\frac{(p+1)(p^2-1)}{p^2-p-1}\leq 2p+1.\] Hence $r^m\leq 2p.$ As $r\neq p$, we have $r^m\leq 2p-1$. Moreover, as \[\frac{1}{|\overline{G}'|}\leq d(\overline{G})\leq  \frac{1}{p},\] we deduce that $r^m\ge p.$
Thus \begin{equation}\label{eqn2}p\leq r^m\leq 2p-1.\end{equation}

By Fitting's theorem \cite[Theorem~4.34]{I08}, $\overline{R}=[\overline{R},\overline{P}]\times \Cen{\overline{R}}{\overline{P}}$. If $\Cen{\overline{R}}{\overline{P}}$ is nontrivial, then $[\overline{R},\overline{P}] \overline{P}$ is a proper subgroup of $\overline{G}$, which implies that it is $p$-closed and hence $\overline{P}$ centralizes $[\overline{R},\overline{P}]$. This is impossible as it would imply that $\overline{P}$ centralizes $\overline{R}$ and thus $\overline{G}$ has a normal Sylow $p$-subgroup. Therefore, $\Cen{\overline{R}}{\overline{P}}=1.$ It follows that $\overline{G}$ is a Frobenius group with kernel $\overline{R}$ and a cyclic complement $\overline{P}$. This implies that $r^m-1=kp$ for some positive integer $k$. Combining with Equation \eqref{eqn2}, we get $r^m-1=p$. As $p\ge 3$, $p+1=r^m$ must be even. Hence $r=2$ and 
\begin{equation*}\label{eqn3}
2^m-1=p.
\end{equation*} In particular, $m\ge 2$ is a prime and $p$ is a Mersenne prime. 

For brevity, we define $\Gamma(p,m):= C_2^m \rtimes C_p$ to be a Frobenius group with kernel an elementary abelian $2$-group of order $2^m$ and cyclic complement of order $p$ with $2^m-1=p$. 


\medskip
\textbf{Claim $8$}: \emph{We have $R'=1$, so $G/O_p(G) = G/R'O_p(G) \cong \Gamma(p,m)$ with $p=2^m-1\ge 7$ and $m\ge 3$ an odd prime.}

Assume that $m=2$. Then $p=3$ and
\begin{align*}
    \gamma(h,3)=\frac{1}{4}(1+\frac{1}{3^{2h-1}})=q_h(\textrm{A}_4).
\end{align*} 
Hence, $q_h(G)>q_h(\textrm{A}_4)$ and so by Theorem \ref{t:qh-criteria}(c), $G$ is supersolvable. By \cite[Theorem~VI.9.1(c)]{Hu67}, as $G$ is a $\{2,3\}$-group, $G$ has a normal Sylow $3$-subgroup, which is a contradiction.
Therefore, we may assume from now on that $m\ge 3$ and $p=2^m-1\ge 7.$

Working in the quotient group $G/O_p(G)$, we may assume $O_p(G)=1.$ By Claim 6, $G'=R$ and so $G''=R'$ with $[R:R']=2^m$ by Claim 7.
Recall that $n$ is the smallest nontrivial character degree of $G$. Assume first that $n\ge p.$  By \cite[Lemma 2(vi)]{G06}, we have 
\begin{align*}
    \frac{1}{p+1}<d(G)\leq \frac{1}{n^2}+(1-\frac{1}{n^2})\frac{1}{|G'|}\leq \frac{1}{p^2}+(1-\frac{1}{p^2})\frac{1}{|G'|}.
\end{align*}
As above, we deduce that $p\leq |G'|\leq 2p-1$. Write $|G'|=|R|=2^{m+c}$ for some integer $c\ge 0.$ Recall that $2^m=1+p$ so 
\begin{align*}
    p\leq |G'|=(1+p)2^c=2^cp+2^c\leq 2p-1.
\end{align*}
This implies that $2^c=1$, and hence $G'=R$ is abelian.

Now assume that $n<p$. As $|G|=2^{m+c}p$ and $n$ divides $|G|$ and is coprime to $p$, we deduce that $n=2^a$ for some positive integer $a\ge 1$. Note that $ (p-1)/2\leq 2^a$ by Claim~5 and thus 
\begin{align*}
    \frac{p-1}{2}\leq 2^a<p=2^m-1.
\end{align*}
It follows that \[\frac{1}{2}(2^m-2)\leq 2^a\leq p-1=2^m-2.\] 
Therefore, 
\begin{align*}
    2^{m-1}-1=\frac{1}{2}(2^m-2)\leq 2^a\leq 2^m-2.
\end{align*}
We deduce that $n=2^a=2^{m-1}.$
By \cite[Lemma 2 (vi)]{G06}, we have 
\begin{align*}
    \frac{1}{p+1}=\frac{1}{2^m}<d(G)\leq \frac{1}{2^{2(m-1)}}+(1-\frac{1}{2^{2(m-1)}})\frac{1}{|G'|}.
\end{align*} 
Since $m\ge 3$, simplifying the previous inequality, we obtain \[|G'|<2^m+\frac{2^m-1}{2^{m-2}-1}.\] Now, the second summand on the right hand side of the inequality is at most $7$ and thus $2^{m+c}=|G'|< 2^m+7.$ However, this is impossible if $c\ge 1$ as $m \ge 3$.

We showed in both cases that $G'=R$ is abelian. If $R$ is not elementary abelian, then the subgroup $S$ generated by all elements of $R$ of order $2$ is a proper subgroup of $R$ stabilized by~$P$, so $SP<G$ has a normal Sylow $p$-subgroup. Then $P \nrm SP$ fixes every element of order $2$ in $R$, and so $P$ acts trivially on $R$ by \cite[Corollary~4.35]{I08}. But then $P \nrm G$, a contradiction. Thus, $R$ is elementary abelian and $G\cong \Gamma(p,m)$, as claimed.  

\medskip
\textbf{Claim $9$}: \emph{The final contradiction.}
 
Assume $O_p(G)=1$. Then $G\cong\Gamma(p,m)$, and so $[G:G']=p$ and $p$ is the unique nontrivial character degree of $G$. Hence, $q_h(G)=\gamma(h,p)$, which is a contradiction. 
 
Assume that $O_p(G)>1$.  Note that $G'=RU$ and $U=G'\cap O_p(G)$ is a normal Sylow $p$-subgroup of $G'$. 
 
(i) Assume that $p\nmid |G'|$. So $G'=R$ is a normal Sylow $r$-subgroup of $G$. Now $P=\langle x\rangle O_p(G)$ is abelian.
In this case, $G'\langle x\rangle\unlhd G.$ If $G'\langle x\rangle <G$, then $x$ centralizes $G'$, which is impossible. Thus $G=G'\langle x\rangle.$ In particular, $P=\langle x\rangle$ and $y=x^p$ centralizes $G'=R.$
 
By Claim 8, $G/O_p(G)\cong \Gamma(p,m)$ is a Frobenius group with $G'=R$ an elementary abelian subgroup of order $2^m=p+1$. Thus $\mbf{F}(G)=G'\times \langle y\rangle=G'\times O_p(G)$ is abelian and in fact $O_p(G)=\langle y\rangle$ is central in $G$. It follows that $p$ is the only nontrivial character degree of $G$. Since $[G:G']=|P|$, we have  \[q_h(G)=\frac{1}{|G|}(|P|+\frac{k(G)-|P|}{p^{2h-2}}) \text{ and } |G|=[G:G']+p^2(k(G)-|P|).\] Solving for $k(G)-|P|$ from the second equation and plugging it in the former, we get  $q_h(G)=\gamma(h,p)$, a contradiction.
 
 \medskip
 (ii) Assume $p\mid |G'|$. So $|G'|=|R|p^e$ for some positive integer $e.$ Let $n$ be the minimal nontrivial degree of $G$. By \cite[Lemma 2 (vi)]{G06}, we have
\begin{equation}\label{eqn4}\frac{1}{p+1}<\frac{1}{n^2}+(1-\frac{1}{n^2})\frac{1}{|G'|}.\end{equation}
 
Assume first that $n\ge p$. Then Equation \eqref{eqn4} implies that \[|G'|<\frac{(p+1)(p^2-1)}{p^2-p-1}.\] As in the proof of Claim 8, we deduce that $|G'|\leq 2p-1$. However, since $|G'|=|R|p^e$ with $e\ge 1$ and $|R|= 2^m=1+p\ge 8$, we see that $|G'|>2p-1,$ which is a contradiction.
 
Assume that $n<p.$ By Claim 5, we have $n\ge (p-1)/2$. Thus $(p-1)/2\leq n\leq p-1$. Let $\chi\in \textrm{Irr}(G)$ with $\chi(1)=n.$ Recall that $2^m=p+1$ and $G/O_p(G)\cong\Gamma(p,m)$. It follows that $\chi(1)$ divides $|R|$ where $|R|=2^m$ since $p\nmid \chi(1)$ and $\chi(1)\mid |G|$. So $n=2^a$ for some positive integer $a$. Since $(p-1)/2=(2^m-2)/2=2^{m-1}-1<2^a\leq p-1=2^m-2$, we deduce that $n=2^{m-1}$.
 
Employing the same argument as in the proof of Claim 8, we obtain a contradiction as $m\ge 3.$
 
The proof of the theorem is now complete.
\qed

\section{The dual invariant}\label{s:ch-criteria}

In this section, we will prove Theorems \ref{t:ch-criteria} and \ref{t:pclosed-dual}. 
First, we prove the abelian criterion of Theorem~\ref{t:ch-criteria}(a).

\vspace{1em}
\noindent\emph{Proof of Theorem~\ref{t:ch-criteria}(a).}
Assume by contradiction that $G$ is a counterexample to the claim with minimal order. Note that the sizes of the conjugacy classes of $\mr{D}_8$ are $1,1,2,2,2$. Since $\til{q}_1(G)=d(G)$, we must have $h\ge 2$. As $G$ is nonabelian, we have $d(G) \le 5/8$ by~\cite{G73}. Every noncentral conjugacy class of $G$ has size at least $2$, so we have
\begin{align*}
    \frac14 \Big(1 + \frac{3}{2^{h}} \Big) &= \til{q}_h(\mr{D}_8) < \til{q}_h(G) \le \frac{1}{|G|}\Big( |\Zen{G}| + \frac{k(G)-|\Zen{G}|}{2^{h-1}} \Big) \\
    &= \frac{1}{[G:\Zen{G}]}\Big(1-\frac{1}{2^{h-1}}\Big) + \frac{d(G)}{2^{h-1}} \\
    &\le \frac{1}{[G:\Zen{G}]}\Big(1-\frac{1}{2^{h-1}}\Big) + \Big(\frac{1}{4} \Big) \Big( \frac{5}{2^{h}}\Big).
\end{align*}
We find immediately from these inequalities that $[G:\Zen{G}]<4$. But then $G/\Zen{G}$ is cyclic and $G$ is abelian, a contradiction.
\qed

\vspace{1em}

Next, we prove the nilpotency criterion of Theorem~\ref{t:ch-criteria}(b).

\vspace{1em}
\noindent\emph{Proof of Theorem~\ref{t:ch-criteria}(b).} Assume by contradiction that $G$ is a counterexample to the claim with minimal order. Since $\til{q}_1(G)=d(G)$, we must have $h\ge 2$. We have
\begin{align*}
    \frac16 < \frac16 \Big(1 + \frac{1}{2^{h-1}} + \frac{1}{3^{h-1}} \Big) = \til{q}_h(\mr{S}_3) < \til{q}_h(G) \le d(G) \le \frac12.
\end{align*}

As $\til{q}_h(G/\Zen{G}) \ge \til{q}_h(G)$ by Lemma~\ref{lem:quotient}, the quotient $G/\Zen{G}$ is nilpotent if $\Zen{G}>1$ by the minimality of $|G|$. But then $G$ is nilpotent, a contradiction. Therefore, we may assume that $|\Zen{G}|=1$.

Since $d(G)/\til{q}_h(G) \le (1/2)/(1/6)=3$, we may take $N=3$ in Lemma~\ref{l:c-inequality}. Letting $n \ge 2$ be the minimal nontrivial class size, if $n^{h-1} \le N=3$, then we must have $h=2$.  In this case,
\begin{align*}
    \frac{11}{36} = \til{q}_2(\mr{S}_3) < \til{q}_2(G) \le \frac{1}{|G|}\Big( 1 + \frac{k(G)-1}{2} \Big) = \frac{1}{2|G|} + \frac{d(G)}{2} \le \frac{1}{2|G|} + \frac14,
\end{align*}
which means $|G|<9$ and so $G \cong \mr{S}_3$, a contradiction. Therefore, it only remains to consider case (2) of Lemma~\ref{l:c-inequality}; that is, $|G| \leq d(G)/\til{q}_h(G)^2 < 18$. The only nonnilpotent groups with trivial center of order less than $18$ are dihedral groups $\mr{D}_{2k}$ for $k=3,5,7$ and~$\mr{A}_4$. Since
\begin{align*}
    \til{q}_h(\mr{D}_{2k}) = \frac{1}{2k}\Big(1+\frac{(k-1)/2}{2^{h-1}} + \frac{1}{k^{h-1}} \Big) 
\end{align*}
when $k$ is odd, $\til{q}_h(\mr{D}_{2k})$ is a decreasing function of $k$, and so $\til{q}_h(G) \le \til{q}_h(\mr{S}_3)$ for all $h$ in these cases. The class sizes of $\mr{A}_4$ are $1,3,4,4$, so
\begin{align*}
    \til{q}_h(\mr{A}_4) = \frac{1}{12} \Big(1 + \frac{1}{3^{h-1}}+\frac{2}{4^{h-1}} \Big) < \frac{1}{12} \Big(2 + \frac{2}{3^{h-1}}+\frac{2}{4^{h-1}} \Big) = \frac{1}{6} \Big(1 + \frac{1}{3^{h-1}} + \frac{1}{4^{h-1}}\Big) < \til{q}_h(\mr{S}_3),
\end{align*}
a contradiction. The theorem is proved.
\qed
\vspace{1em}

We now prove the supersolvability criterion of Theorem~\ref{t:ch-criteria}(c).

\vspace{1em}
\noindent\emph{Proof of Theorem~\ref{t:ch-criteria}(c).}
Assume by contradiction that $G$ is a counterexample to the claim with minimal order. Since $\til{q}_1(G)=d(G)$, we must have $h\ge 2$. We have
\begin{align*}
    \frac{1}{12} < \frac{1}{12} \Big(1 + \frac{1}{3^{h-1}} + \frac{2}{4^{h-1}} \Big)= \til{q}_h(\mr{A}_4) < \til{q}_h(G) \le d(G) \le \frac{1}{3}.
\end{align*}
Since $d(G)>1/12$, $G$ is solvable by \cite{D73}. Since $G$ is supersolvable if $G/\Phi(G)$ is supersolvable by \cite[Theorem~VI.8.6(a)]{Hu67} and $G$ is supersolvable if $G/\Zen{G}$ is supersolvable by the definition of supersolvability \cite[Definition~VI.8.5(b)]{Hu67}, it follows by Lemma~\ref{lem:quotient} and induction that both the Frattini subgroup and the center of $G$ are trivial. 

Suppose that $G$ has two distinct minimal normal subgroups, say $M_1$ and $M_2$. By Lemma \ref{lem:quotient} and the minimality of $|G|$, both $G/M_1$ and $G/M_2$ are supersolvable, and so $G$ is supersolvable as $G$ embeds into a supersolvable group $G/M_1\times G/M_2$. This is a contradiction. Therefore, $G$ has a unique minimal normal subgroup, say $M$, and since $G$ is solvable, $M$ is an elementary abelian $p$-group of order $p^m$ for some prime $p$ and positive integer $m\ge 1$. If $m=1$, then $G$ is supersolvable, so we may assume that $m>1$. As $\Phi(G)=1$, we have $\mbf{F}(G)=M$ by Gasch\"{u}tz' theorem \cite[Theorem~III.4.5]{Hu67}, and hence $\Cen{G}{M}=M$ by \cite[Theorem~III.4.2 b)]{Hu67}.

First assume that $n=2$, where $n$ is the minimal size of a noncentral class of $G$. Let \mbox{$x\in G$} be such that $|x^G|=[G:\Cen{G}{x}]=2$. Then $\Cen{G}{x}\unlhd G$, and so $M \le \Cen{G}{x}$ as $M$ is the unique minimal normal subgroup of $G$. Hence, $x\in \Cen{G}{M}=M$. Since $\langle x^G\rangle$ is a normal subgroup of~$G$ and $x\in M$, we must have $\langle x^G\rangle=M$. As $|x^G|=2$, $M=\langle x,x^t\rangle$ for some $t\in G\setminus \Cen{G}{x}$. In particular, $|M|=p^2$ as $M$ is noncyclic elementary abelian and generated by two elements. Now, $M=\Cen{G}{M}=\Cen{G}{x}\cap \Cen{G}{x^t}=\Cen{G}{x}\cap \Cen{G}{x}^t=\Cen{G}{x}$ as $\Cen{G}{x}$ is normal in $G$. Thus, $[G:M]=[G:\Cen{G}{x}]=2$, and so $|G|=2p^2$. If $p=2$, then $G$ is a finite $2$-group, so it is nilpotent and hence it is supersolvable. Therefore, we may assume that $p\ge 3$. Since $|G|=2p^2$, $G$ has a cyclic Sylow $2$-subgroup $Q=\langle g\rangle$, where $g$ is an involution, acting on the unique minimal normal subgroup $M \cong C_p\times C_p$ of $G$ by conjugation. But this is impossible. To see this, let $1\neq a\in M$. We have $(aa^g)^g=a^ga=aa^g$, so $\langle aa^g\rangle$ is a normal subgroup of $G$. If $a a^g \neq 1$, then $\gen{a a^g}$ is a nontrivial cyclic normal subgroup of $G$ properly contained in the noncyclic group $M$, contradicting the fact that $M$ is minimal normal. On the other hand, if $aa^g=1,$ then $a^g=a^{-1}$ and $\langle a\rangle$ is again a nontrivial normal subgroup of $G$ properly contained in $M$.

Therefore, may assume that $n\ge 3$. Since $d(G)/\til{q}_h(G) \le (1/3)/(1/12) = 4$, we may choose to set $N=8$ in Lemma~\ref{l:c-inequality}. Since $3^{h-1}\leq n^{h-1} \le N=8$, we must have  $h=2$.
We have
\begin{align*}
   \frac{11}{72}=\til{q}_2(\mr{A}_4)< \til{q}_2(G) \leq \frac{1}{|G|}+\frac{d(G)}{n}\leq \frac{1}{|G|}+\frac{1}{9}.
\end{align*}
Simplifying this inequality, we obtain $|G|<24$. Since $G$ is non-supersolvable with trivial center, the only possibility for $G$ is $\mr{A}_4$. This is a contradiction.

There remains to consider case (2) of Lemma~\ref{l:c-inequality}. Since $\Zen{G}=1$ by the first paragraph and we chose $N=8$, we have
\begin{align*}
    |G| \leq \frac{8}{\frac{1}{12}\cdot 9 - \frac13} = \frac{96}{5}<20.
\end{align*}
The only non-supersolvable group of order less than $20$ with trivial center is again $\mr{A}_4$, so this contradiction completes the proof of the theorem.
\qed
\vspace{1em}

Now we prove the solvability criterion of Theorem~\ref{t:ch-criteria}(d).

\vspace{1em}
\noindent\emph{Proof of Theorem~\ref{t:ch-criteria}(d).}
Assume by contradiction that $G$ is a counterexample to the claim with minimal order. Since $\til{q}_1(G)=d(G)$, we must have $h\ge 2$. We have
\begin{align*}
    \frac{1}{60} < \til{q}_h(\mr{A}_5) < \til{q}_h(G) \le d(G) \le \frac{1}{12}.
\end{align*}
As $\til{q}_h(G/\mr{Sol}(G)) \ge \til{q}_h(G)$ by Lemma~\ref{lem:quotient}, the quotient $G/\mr{Sol}(G)$ is solvable if the solvable radical $\mr{Sol}(G)$ is nontrivial by the minimality of $|G|$. But then $G$ is solvable, a contradiction. Therefore, we may assume that $|\mr{Sol}(G)|=1$.

Let $n$ be the minimal nontrivial class size of $G$. If $n$ is a prime power, then the subgroup generated by that class is a solvable normal subgroup of $G$ by \cite{Kazarin}. As $G$ has no nontrivial solvable normal subgroups, we must have $n \ge 6$. Since $d(G)/\til{q}_h(G) \le (1/12)/(1/60) = 5$, we may take $N=5$ in Lemma~\ref{l:c-inequality}. This means that $n^{h-1} \le N$ never occurs, and we need only consider case (2) of Lemma~\ref{l:c-inequality}; that is,
\begin{align*}
    |G| \leq d(G)/\til{q}_h(G)^2 < 300.
\end{align*}
The only nonsolvable groups of order less than $300$ with trivial solvable radical are $\mr{A}_5$, $\mr{S}_5$, and $\mr{PSL}_3(2)$. The conjugacy class sizes of $\mr{A}_5$ are $1, 12, 12, 15, 20$; those of $\mr{S}_5$ are $1, 10, 15, 20, 20, 24, 30$; and those of $\mr{PSL}_3(2)$ are $1, 21, 24, 24, 42, 56$. Then one can check by direct calculation that $\til{q}_h(\mr{PSL}_3(2)) < \til{q}_h(\mr{S}_5) < \til{q}_h(\mr{A}_5)$.
This means $G$ is not isomorphic to $\mr{A}_5$, $\mr{S}_5$, or $\mr{PSL}_3(2)$, and we are done.
\qed

\vspace{1em}
We now prove Theorem~\ref{t:pclosed-dual}. For a prime $p$ and a positive integer $h$, define
\begin{align*}
    \til{\gamma}(h,p)=\frac{1}{p(p+1)} \Big(1+\frac{1}{p^{h-1}}+\frac{p-1}{(p+1)^{h-1}} \Big).
\end{align*}

\vspace{1em}

\noindent\emph{Proof of Theorem~\ref{t:pclosed-dual}}. Since $\til{\gamma}(h,2) = \til{q}_h(\textrm{S}_3)$, if $p=2$, then $G$ is nilpotent by Theorem~\ref{t:ch-criteria}(b) and thus has a normal Sylow $2$-subgroup. Therefore, we may assume $p\ge 3$. As $\til{q}_1(G)=d(G)$ and $\til{\gamma}(1,p)=1/p$, the theorem follows from Lemma~\ref{l:dG}(c) when $h=1$. So we may assume that $h \ge 2$. Let $G$ be a counterexample to the theorem with minimal order. Then 
\begin{align*}
    \frac{1}{p}\ge d(G)\ge \til{q}_h(G) > \til{\gamma}(h,p)>\frac{1}{p(p+1)}
\end{align*}
but $G$ is not $p$-closed. Let $P$ be a Sylow $p$-subgroup of $G$. We may assume that $O_p(G)=1$ as otherwise, by Lemma~\ref{lem:quotient} and the minimality of $|G|$, we deduce that $G/O_p(G)$ has a normal Sylow $p$-subgroup and so does $G$, a contradiction. Next, assume that $\Zen{G}>1$. Then $\Zen{G}$ is a $p'$-group, and $G/\Zen{G}$ has a normal Sylow $p$-subgroup $P\Zen{G}/\Zen{G}$ again by Lemma~\ref{lem:quotient}. It follows that $P\unlhd P\Zen{G}\unlhd G$ which implies that $P\unlhd G$, a contradiction. Therefore, $\Zen{G}=1$.

Next, we claim that $G$ has a unique minimal normal subgroup. Suppose that $G$ has two distinct minimal normal subgroups, say $N_1$ and $N_2$. By Lemma \ref{lem:quotient} and the minimality of $|G|$, both $G/N_1$ and $G/N_2$ are $p$-closed. Since $G$ embeds into $G/N_1\times G/N_2$, the group $G$ is $p$-closed as subgroups and direct products of $p$-closed groups are $p$-closed. It follows that $G$ has a unique minimal normal subgroup $N$, and $N\cong S^m$ for some simple group $S$ and some integer $m\ge 1$. 

Let $H=\Norm{G}{P}$. Since $PN\unlhd G$ by Lemma~\ref{lem:quotient} and the minimality of $|G|$, we have $G=HN$ by Frattini's argument.

\medskip
\textbf{Claim $1$:} \emph{If $C$ is a noncentral conjugacy class of $G$, then $|C|\ge p$.}

Let $C_0=x^G$ be a noncentral conjugacy class of $G$ that has minimal size. We claim that $|C_0|\ge p.$ Suppose by contradiction that $|C_0|=[G:\Cen{G}{x}]<p$. Let $U=\cap_{g\in G}\Cen{G}{x}^g$ be the core of $\Cen{G}{x}$ in $G$. Note that $U\unlhd G$ and $[G:U]=|C_0|\cdot [\Cen{G}{x}:U].$ As $G/U$ embeds into the symmetric group $\mr{S}_t$, where $t=|C_0|=[G:\Cen{G}{x}]<p$, $|G/U|$ is not divisible by $p$. Since $U \nrm G$ and Sylow subgroups are conjugate by Sylow's theorems, $P\leq U$. Hence $U$ is nontrivial and thus $N\leq U\leq \Cen{G}{x}.$ In particular, $x\in \Cen{G}{N}$.

If $N$ is nonabelian, then $\Cen{G}{N}=1$, which leads to a contradiction. Now assume that $N$ is abelian. Then $S\cong C_r$ for some prime $r$ and so $N$ is an elementary abelian $r$-group of order $r^m$. Since $G=HN$ and $N$ is the unique minimal normal subgroup of $G$, we must have $\Phi(G)=1$ and that $\mbf{F}(G)=O_r(G)$ is a product of minimal normal subgroups of $G$ by Gasch\"{u}tz' theorem \cite[Theorem~III.4.5]{Hu67}. Thus $\mbf{F}(G)=N$ and so $\Cen{G}{N}=N$ by \cite[Theorem~III.4.2 b)]{Hu67}. It follows that $x\in N$. Furthermore, since $P\leq \Cen{G}{x}$, we have $x\in \Cen{N}{P}.$ Observe that $P$ acts coprimely and nontrivially on $N$. By Fitting's Theorem \cite[Theorem~4.34]{I08}, $N=[N,P]\times \Cen{N}{P}.$ Note that $\Norm{N}{P}=N\cap \Norm{G}{P}=N\cap H$. As $N\unlhd G$, we have $H\cap N\unlhd H$ and since $N$ is abelian, $H\cap N\unlhd N$. It follows that $H\cap N\unlhd G=NH$, and by the minimality of $N$, we have $N\cap H=1$. Now $\Cen{N}{P}\leq \Norm{N}{P}=1$, which implies that $x\in \Cen{N}{P}=1$, a contradiction.

Therefore, we have shown that $|C_0|\ge p$, and thus since $\Zen G=1$, for each nontrivial conjugacy class $C$ of $G$, we have $|C|\ge p.$

\medskip
\textbf{Claim $2$:} \emph{$|G|<2p(p+1)$ and $|P|=p$. Let $n_p=[G:\Norm{G}{P}].$ Then $n_p=1+p$ or $1+2p.$ In particular,  $H=P$ and $G=PN$.}

Assume by contradiction that $|G|\ge 2p(p+1)$.  The hypothesis yields:
 \[\frac{1}{p(p+1)}<\til{q}_h(G)\leq \frac{1}{|G|}(1+\sum_{1\neq C\in \textrm{Cl}(G)}\frac{1}{|C|^{h-1}})\leq  \frac{1}{|G|}(1+\frac{k(G)-1}{p^{h-1}})= \frac{1}{|G|}(1-\frac{1}{p^{h-1}})+\frac{d(G)}{p^{h-1}}.\]
Then \[\frac{1}{p(p+1)}<\frac{1}{2p(p+1)}+\frac{d(G)}{p^{h-1}}.\]
Since $d(G)\leq 1/p$, we obtain $p^h< 2p(p+1)$, which implies $h=2$ as $p\ge 3$.

For $h=2$, the hypothesis yields (noting that $d(G)\leq 1/p$):
\begin{align*}
    \til{\gamma}(2,p) = \frac{1}{p(p+1)} \Big(1+\frac{1}{p}+\frac{p-1}{p+1} \Big)<\til{q}_h(G)\leq \frac{1}{|G|} \Big(1-\frac{1}{p}\Big)+\frac{d(G)}{p}\leq \frac{1}{|G|} \Big(1-\frac{1}{p} \Big)+\frac{1}{p^2}.
\end{align*}
Simplifying, we obtain $|G|<(p+1)^2<2p(p+1)$, a contradiction.

Assume that $|P|\ge p^2.$ 
If $N$ is nonabelian, then $|N|$ is divisible by $4$ and $|G|\ge 4p^2>2p(p+1)$, a contradiction. Assume that $N$ is abelian. Since $\Cen{G}{N}=N$, $P$ acts nontrivially and coprimely on~$N$. Hence $|N|>p$ as $P$ has at least one nontrivial orbit on $N\setminus \{1\}.$ Then $H\cap N=1$ by the argument in Claim 1, and thus $|G|=|H||N|\ge p^2|N|>p^3>2p(p+1)$. In both cases, we obtain a contradiction. Thus, $|P|=p$ as claimed.

By Sylow's theorem, we know that $n_p\equiv 1$ mod $p$, so that $n_p=1+kp$ for some positive integer~$k$. Since $n_p=[G:H]\leq [G:P]<2(p+1)$, we have $n_p<2(p+1)$ and so $n_p\in \{1+p,1+2p\}.$

Now since $|G|=|H|n_p\in \{(1+p)|H|,(1+2p)|H|\}$ and $|G|<2p(p+1),$ we must have $|H|<2p$, forcing $|H|=p$ and thus $H=\Cen{G}{P}=P$. In particular, $G=PN$.

\medskip
\textbf{Claim $3$:} \emph{$N$ is abelian.}

Suppose by contradiction that $N$ is nonabelian. Then $N\cong S^m$ for some nonabelian simple group $S$. By the uniqueness of $N$, we deduce that $G$ has a trivial solvable radical.

If $|C|=p$ for some class $C$, then by \cite{Kazarin}, $\langle C\rangle$ is a normal solvable subgroup of $G$, which is impossible. Thus $|C|\ge p+1$ for every nontrivial class $C$ of $G$. 

If $p$ divides $|S|$, then $p=|G|_p \ge (|S|_p)^m$ means that $m=1$ and $G=PS=S$ is simple. However, in this case, $|G|\ge 2p(p+1)$ by the proof of Lemma \ref{lem1}. Thus $p$ does not divide  $|S|$ and so $N$ is a $p'$-group. So $|G|=p|S|^m$ with $m=1$ or $m=p$. (If $N$ is not simple, then $m=p$ since $P$ acts transitively on the simple factors of $N$ as $N$ is a minimal normal subgroup of $G$.)

Now  $|N|=[G:H]=n_p\in \{1+p,1+2p\}$, so we have $|N|=|S|^m=1+p$ or $1+2p$. Since $P$ has order $p$ and $\Cen{G}{P}=P$ by the last step, the only point of $N$ fixed by $P$ under conjugation is the identity and every other orbit has size $p$. This implies that $P$ has at most 3 orbits in its conjugation action on $N$, which is impossible since $|N|$ is divisible by at least $3$ distinct primes by Burnside's $p^aq^b$-theorem,  so $N$ must have at least $4$ orbits under the action of $P$.

\medskip
\textbf{Claim $4$:} \emph{The final contradiction.}

We now have that $G=PN$ with $|P|=p$, and $N$ is an elementary abelian $r$-group of order $r^m$ for some prime $r$ and $r^m=|N|=1+p$ or $1+2p$.  Since $\Zen{G}=1$, we see that $G$ is a Frobenius group with Frobenius kernel $N$ and complement $P$. Therefore, $G$ has the following class structure: the $r^m-1$ elements of order $r$ split into classes of size $p$, and the $r^m(p-1)$ elements of order $p$ split into classes of size $r^m$. Since $r^m=1+kp$ for $k=1$ or $k=2$, we have
\begin{align*}
\til{q}_h(G)=\frac{1}{|G|}\Big(1+\frac{(r^m-1)/p}{p^{h-1}}+\frac{(p-1)}{(r^{m})^{h-1}}\Big) = \frac{1}{p(1+kp)}\Big(1+\frac{k}{p^{h-1}}+\frac{(p-1)}{(1+kp)^{h-1}}\Big).
\end{align*}
If $|N|=1+p$, then substituting $k=1$ into the equation above yields $\til{q}_h(G)=\til{\gamma}(h,p)$, a contradiction. So $r^m=|N|=1+2p$, and we can calculate $\til{q}_h(G)$ by substituting $k=2$ into the equation above.

Since \[\frac{1}{p(p+1)}<\til{q}_h(G)\leq \frac{1}{|G|}+\frac{d(G)}{p^{h-1}}\leq \frac{1}{p(1+2p)}+\frac{1}{p^{h}},\] we deduce that $p^h<(1+p)(1+2p)$, forcing $h=3$ and $p=3$, or $h=2$. In the former case, we calculate directly that $\til{\gamma}(3,3)>\til{q}_3(G)$, a contradiction. Similarly, if $h=2$, then we calculate directly that we must have
\begin{align*}
    \frac{1}{p(p+1)} \Big(1+\frac{1}{p}+\frac{p-1}{p+1} \Big) = \til{\gamma}(2,p)<\til{q}_2(G) = \frac{1}{p(1+2p)} \Big(1+\frac{2}{p}+\frac{p-1}{1+2p} \Big).
\end{align*}
This can be simplified to $5p^4+2p^3-3p^2-3p<1$, which is impossible since $p\ge 3$. The proof is now complete.
\qed

\section{Introduction to Dijkgraaf--Witten theory.}\label{s:tqft}

For mathematicians, a topological quantum field theory (TQFT) is a machine for generating topological invariants of manifolds. For example, Witten \cite{W89} showed that the Jones polynomial is a topological invariant of manifolds in a certain TQFT. Atiyah \cite{At89} axiomatized topological quantum field theories in the language of category theory as a functor from a particular category of bordisms to the category of vector spaces. The invariants that we consider arise in the context of Dijkgraaf--Witten theory \cite{Di95,DW90} in one space dimension and one time dimension. Such a TQFT determines a commutative Frobenius algebra that gives rise to topological invariants of surfaces. The new contribution of this paper is that, upon assuming this Frobenius algebra is the center of a complex group algebra, we instead consider these invariants as invariants of groups. It should be noted, however, that Dijkgraaf--Witten theory has already been used to study finite groups; for example, row and column sums in their character tables \cite{K22,PRR24,RS22}. In this section, we derive these topological invariants in an elementary, explicit way, without assuming any previous knowledge of quantum field theory. For a similarly gentle introduction, we recommend Dijkgraaf's 2015 lecture at the Institute for Advanced Study~\cite{Di15}. 

\begin{table}[t]
    \centering
    \begin{tabular}{ >{\centering\arraybackslash}m{0.3\textwidth} | >{\centering\arraybackslash}m{0.3\textwidth} | >{\centering\arraybackslash}m{0.3\textwidth} } \hline
    Bordism & Map & $V=\Zen{\C G}$ \\ \hline
    %
    %
    \begin{tikzpicture}[tqft/cobordism/.style={draw}, tqft/every boundary component/.style={draw},rotate=90,transform shape]
    \pic [tqft/reverse pair of pants, name];
    \end{tikzpicture} 
    & $V \otimes V \rightarrow V$ 
    & $e_\chi \cdot e_{\chi'} = \delta_{\chi,\chi'} e_\chi$ \\ \hline
    %
    %
    \begin{tikzpicture}[tqft/cobordism/.style={draw}, tqft/every boundary component/.style={draw},rotate=90,transform shape]
    \pic[tqft/reverse pair of pants, name=rpants2];
    \pic [tqft/cup, name=cup, at=(rpants2-outgoing boundary 1), anchor = incoming boundary 1];
    \end{tikzpicture}
    & $\gen{\;}:V \otimes V \rightarrow \C$
    & $\gen{e_\chi,e_{\chi'}} = \delta_{\chi,\chi'} \big(\frac{\chi(1)}{|G|}\big)^2$ \\ \hline
    %
    %
    \begin{tikzpicture}[tqft/cobordism/.style={draw}, tqft/every boundary component/.style={draw},rotate=90,transform shape]
    \pic [tqft/cap, name];
    \end{tikzpicture} 
    & \makecell{$\C \rightarrow V$ \\ $1 \mapsto e$ }
    & $e = \sum\limits_{\chi \in \Irr{G}} e_\chi$\\ \hline
    %
    %
    \begin{tikzpicture}[tqft/cobordism/.style={draw}, tqft/every boundary component/.style={draw},rotate=90,transform shape]
    \pic [tqft/cup, name];
    \end{tikzpicture} 
    & $V \rightarrow \C$
    & $e_\chi \mapsto \big(\frac{|G|}{\chi(1)}\big)^2$\\ \hline
    %
    %
    \begin{tikzpicture}[tqft/cobordism/.style={draw}, tqft/every boundary component/.style={draw},rotate=90,transform shape]
    \pic [tqft/pair of pants, name=rpants];
    \end{tikzpicture}
    & $V \rightarrow V \otimes V$
    & $e_\chi \mapsto \big(\frac{|G|}{\chi(1)}\big)^2 e_\chi \otimes e_\chi$ \\ \hline
    \end{tabular}
    \caption{Bordisms giving rise to linear maps of vector spaces.}
    \label{tab:1}
\end{table}

Let $X$, $Y$ be two closed ($n-1$)-dimensional manifolds. A bordism from $X$ to $Y$ is a quadruple $(Z,p, f_X,f_Y)$ where $Z$ is an $n$-manifold with boundary $\delta Z$, $p:\delta Z\longrightarrow \{0,1\}$ is a continuous map (partition of the boundary into two components), $f_X:X\longrightarrow p^{-1}(0)$, $f_Y:Y\longrightarrow p^{-1}(1)$ are diffeomorphisms. (The empty set is considered an $n-1$-manifold for all $n$.) Two such bordisms $Z$, $Z'$ from $X$ to $Y$ are considered equivalent if there is a diffeomorphism from $Z$ to $Z'$ respecting the maps $p$ and $f$ in the obvious way. Declaring Mor (X,Y) to be the set of all equivalence classes of bordisms from $X$ to $Y$, we get the structure of a category $B(n-1)$ on closed $(n-1)$-manifolds. The composition of two bordisms is obtained by gluing them. This category $B(m)$ is a symmetric monoidal category with disjoint union as the monoidal product.  Any covariant functor of symmetric monoidal categories from $B(m)$ to vector spaces $\mr{Vect}_k$ over a field $k$, with tensor product as the monoidal product, is an example of a topological quantum field theory.
 
When $n=2$, closed $1$-manifolds are just disjoint unions of a finite number of circles $S^1$. So a TQFT $T$ of the above type on objects of $B(1)$ is determined by specifying $T(S^1)=V$. The images of morphisms via $T$ are linear maps between appropriate tensor powers of~$V$. For example, a cylinder is a bordism between two circles, and it corresponds to the identity map $\mr{id}:V \rightarrow V$. 
Perhaps most importantly, the ``pair of pants'' corresponds to a multiplication of quantum states $V \otimes V \rightarrow V$, which gives $V$ the structure of an algebra. This multiplication is commutative, which we can see from topological considerations: Interchanging the ``legs'' in a pair of pants (which corresponds, on the vector space side, to switching the order of the tensor product) is a diffeomorphism, so two such bordisms are equivalent. The bordisms giving rise to a multiplicative identity $e \in V$, a comultiplication $V \rightarrow V \otimes V$ and a bilinear form $\gen{\;}:V \otimes V \rightarrow \C$ are pictured in Table~\ref{tab:1}. By considering appropriate bordisms, it is not difficult to show that the multiplication is associative and the bilinear form satisfies the identity $\gen{a,b\cdot c} = \gen{a \cdot b, c}$ of a Frobenius algebra. A genus-$h$ surface can be decomposed into a composition of the bordisms in Table~\ref{tab:1}, giving rise to a map $\C \rightarrow \C$. Since the map is linear, it is defined by the image of~$1$, and this complex number is a topological invariant of the surface.


\begin{figure}[t]
    \centering
    \begin{minipage}{0.33\textwidth}
        \centering
        \begin{tikzpicture}[tqft/cobordism/.style={draw}, tqft/every boundary component/.style={draw},rotate=90,transform shape]
        \pic[tqft/cap, name=cap];
        \pic [tqft/reverse pair of pants, name, at=(cap-outgoing boundary 1), anchor = incoming boundary 2];
        \end{tikzpicture}
        \caption*{(a)}
    \end{minipage}%
    \hfill 
    \begin{minipage}{0.33\textwidth} 
    \centering
        \begin{tikzpicture}[tqft/cobordism/.style={draw}, tqft/every boundary component/.style={draw},rotate=90,transform shape]
        \pic[tqft/cap, name=cap];
        \pic [tqft/reverse pair of pants, name=rpants, at=(cap-outgoing boundary 1), anchor = incoming boundary 2];
        \pic [tqft/cup, name=cup, at=(rpants-outgoing boundary 1), anchor = incoming boundary 1];
        \end{tikzpicture}
        \caption*{(b)}
    \end{minipage}
    \hfill
    \begin{minipage}{0.33\textwidth}
        \centering
        \begin{tikzpicture}[tqft/cobordism/.style={draw}, tqft/every boundary component/.style={draw},rotate=90,transform shape]
        \pic[tqft/pair of pants, name=rpants2];
        \pic [tqft/cup, name=cup, at=(rpants2-outgoing boundary 2), anchor = incoming boundary 1];
        \end{tikzpicture}
        \caption*{(c)}
    \end{minipage}
    \par
    \vspace{1em}
    \begin{minipage}[c]{1\textwidth}
    \begin{center}
    \begin{tabular}{ >{\centering\arraybackslash}m{0.54\textwidth}  >{\centering\arraybackslash}m{0.01\textwidth}  >{\centering\arraybackslash}m{0.27\textwidth} }
    \begin{tikzpicture}[tqft/cobordism/.style={draw}, tqft/every boundary component/.style={draw},rotate=90,transform shape]
    \pic [tqft/cap, name=s1];
    \pic [tqft/pair of pants, name=s2,at=(s1-outgoing boundary 1), anchor = incoming boundary 1];
    \pic [tqft/reverse pair of pants, name=s3, at=(s2-outgoing boundary 1), anchor = incoming boundary 1];
    \pic [tqft/pair of pants, name=s4,at=(s3-outgoing boundary 1), anchor = incoming boundary 1];
    \pic [tqft/reverse pair of pants, name=s5, at=(s4-outgoing boundary 1), anchor = incoming boundary 1];
    \end{tikzpicture} & 
    $\cdots$ &
    \begin{tikzpicture}[tqft/cobordism/.style={draw}, tqft/every boundary component/.style={draw},rotate=90,transform shape]
    \pic [tqft/pair of pants, name=s2,at=(s1-outgoing boundary 1), anchor = incoming boundary 1];
    \pic [tqft/reverse pair of pants, name=s3, at=(s2-outgoing boundary 1), anchor = incoming boundary 1];
    \pic [tqft/cup, name=s4,at=(s3-outgoing boundary 1), anchor = incoming boundary 1];
    \end{tikzpicture}
    \end{tabular}
    \end{center}
    \caption*{(d)}
    \end{minipage}
    \captionof{figure}{Bordisms for calculating (a) the ``cup'', (b) the ``cap'' and (c) the comultiplication. Decomposing the genus $h$-surface in (d) into bordisms calculated in Table~\ref{tab:1} yields an invariant $\mc{Q}_h(G)$ of the surface.} 
    \label{fig:1}
\end{figure}

Now we assume that $V=\Zen{\C G}$ is the center of the complex group algebra of a finite group~$G$. We compute the maps in terms of a basis for $\Zen{\C G}$ that is indexed by the irreducible complex characters $\chi \in \Irr{G}$ of $G$; namely, the primitive central idempotents 
\begin{align*}
    e_\chi = \frac{\chi(1)}{|G|} \sum_{g \in G} \q{\chi(g)} g.
\end{align*}
In this basis, the multiplicative identity decomposes as $e = \sum_{\chi \in \Irr{G}} e_\chi$, and the multiplication takes the form $e_\chi \cdot e_{\chi'} = \delta_{\chi,\chi'} e_\chi$. We define the bilinear form to be
\begin{align*}
    \gen{e_\chi,e_{\chi'}} = \delta_{\chi,\chi'} \bigg(\frac{\chi(1)}{|G|}\bigg)^2,
\end{align*}
which is the coefficient of the identity in the product $e_\chi \cdot e_{\chi'}$ times~$1/|G|$. This is the so-called special Frobenius algebra structure on $\Zen{\C G}$. 

To show that the ``cup'' in the third row of Table~\ref{tab:1} gives rise to a multiplicative identity of~$V$, note that the bordism in Fig.~\ref{fig:1}(a) is homeomorphic to the cylinder, which is the identity map. To show that the ``cap'' in the fourth row of Table~\ref{tab:1} sends $e_\chi$ to $(\chi(1)/|G|)^2$, note that the cap is homeomorphic to adding a cup to the bordism for the bilinear form, as in Fig.~\ref{fig:1}(b). Finally, to show that the comultiplication is $e_\chi \mapsto (|G|/\chi(1))^2 e_\chi \otimes e_\chi$ in the last row of Table~\ref{tab:1}, note that capping off the comultiplication is homeomorphic to the cylinder, which represents the identity map, as in Fig.~\ref{fig:1}(c).

Now, we compute the map $\C \rightarrow \C$ for a genus-$h$ surface by composing the maps in Table~\ref{tab:1}:
\begin{align*}
    1 & 
    \overset{\scalebox{0.45}{\begin{tikzpicture}[tqft/cobordism/.style={draw}, tqft/every boundary component/.style={draw},rotate=90,transform shape]
    \pic [tqft/cap, name];
    \end{tikzpicture}}}{\longmapsto}
    \sum_{\chi \in \Irr{G}} e_\chi
    \overset{\scalebox{0.2}{\begin{tikzpicture}[tqft/cobordism/.style={draw}, tqft/every boundary component/.style={draw},rotate=90,transform shape]
    \pic [tqft/pair of pants, name];
    \end{tikzpicture}}}{\longmapsto}
    \sum_{\chi \in \Irr{G}} \bigg( \frac{|G|}{\chi(1)} \bigg)^2 e_\chi \otimes e_\chi
    \overset{\scalebox{0.2}{\begin{tikzpicture}[tqft/cobordism/.style={draw}, tqft/every boundary component/.style={draw},rotate=90,transform shape]
    \pic [tqft/reverse pair of pants, name];
    \end{tikzpicture}}}{\longmapsto}
    \sum_{\chi \in \Irr{G}} \bigg( \frac{|G|}{\chi(1)} \bigg)^2 e_\chi \\
    & \overset{\scalebox{0.2}{\begin{tikzpicture}[tqft/cobordism/.style={draw}, tqft/every boundary component/.style={draw},rotate=90,transform shape]
    \pic [tqft/pair of pants, name];
    \end{tikzpicture}}}{\longmapsto}
    \cdots
    \overset{\scalebox{0.2}{\begin{tikzpicture}[tqft/cobordism/.style={draw}, tqft/every boundary component/.style={draw},rotate=90,transform shape]
    \pic [tqft/reverse pair of pants, name];
    \end{tikzpicture}}}{\longmapsto}
    \sum_{\chi \in \Irr{G}} \bigg( \frac{|G|}{\chi(1)} \bigg)^{2h} e_\chi
    \overset{\scalebox{0.4}{\begin{tikzpicture}[tqft/cobordism/.style={draw}, tqft/every boundary component/.style={draw},rotate=90,transform shape]
    \pic [tqft/cup, name];
    \end{tikzpicture}}}{\longmapsto}
    \sum_{\chi \in \Irr{G}} \bigg( \frac{|G|}{\chi(1)} \bigg)^{2h-2}=\mc{Q}_h(G).
\end{align*}

The number $\mc{Q}_h(G)$ is the topological invariant of a genus-$h$ surface in this TQFT. In this paper, we considered a more well-behaved invariant of groups by scaling $\mc{Q}_h(G)$ by an appropriate factor of~$|G|$, namely $q_h(G)=1/|G| \sum_{\chi \in \mr{Irr}(G)} (1/\chi(1))^{2h-2}$.

\section*{Acknowledgment} The authors would like to thank Dr.\@ Alessandro Mariani and Prof.\@ Marcin Mazur for comments on a previous version of the paper, as well as the anonymous referee for their close reading of the paper and helpful comments.



\footnotesize

\textsc{Department of Mathematics and Statistics, Binghamton University, Binghamton, NY 13902-6000, USA}\par\nopagebreak \textit{E-mail address}: \texttt{cschroe2@binghamton.edu}

\vspace{\baselineskip}

\textsc{Department of Mathematics and Statistics, Binghamton University, Binghamton, NY 13902-6000, USA}\par\nopagebreak \textit{E-mail address}: \texttt{htongvie@binghamton.edu}


\begin{thebibliography}{100}

\bibitem{At89} M.~F. Atiyah, Topological quantum field theories, Inst. Hautes \'Etudes Sci. Publ. Math. No. 68 (1988), 175--186.

\bibitem{BE} A. Ballester-Bolinches\ and\ R. Esteban-Romero, On minimal non-supersoluble groups, Rev. Mat. Iberoam. {\bf 23} (2007), no.~1, 127--142. 

\bibitem{BMN} F. Barry, D. MacHale\ and \'{A}. \ N\'{\i}~Sh\'{e}, Some supersolvability conditions for finite groups, Math. Proc. R. Ir. Acad. {\bf 106A} (2006), no.~2, 163--177. 

\bibitem{B55} W. Burnside, {\it Theory of groups of finite order}, Dover, New York, 1955.


\bibitem{CH} D. Chillag\ and\ M. Herzog, On character degrees quotients, Arch. Math. (Basel) {\bf 55} (1990), no.~1, 25--29.

\bibitem{C25} Y. Choi, Small values and forbidden values for the Fourier anti-diagonal constant of a finite group, J. Aust. Math. Soc. {\bf 118} (2025), no.~3, 297--316.

\bibitem{Di95} R.~H. Dijkgraaf, Fields, strings and duality, in {\it Sym\'etries quantiques (Les Houches, 1995)}, 3--147, North-Holland, Amsterdam.

\bibitem{Di15} R.~H. Dijkgraaf, ``PiTP 2015 -- `Introduction to Topological and Conformal Field Theory (1 of 2)' - Robbert Dijkgraaf'', uploaded by Institute for Advanced Study, 12 Aug. 2015, \url{https://www.youtube.com/watch?v=jEEQO-tcyHc}.

\bibitem{DW90} R.~H. Dijkgraaf and E. Witten, Topological gauge theories and group cohomology, Comm. Math. Phys. {\bf 129} (1990), no.~2, 393--429.

\bibitem{D73} J.~D. Dixon, Solution to Problem 176, Canadian Mathematical Bulletin, {\bf 16} (1973), 302.

\bibitem{Doerk} K. Doerk, Minimal nicht \"{u}beraufl\"{o}sbare endliche Gruppen, Math. Z. {\bf 91} (1966), 198--205.

\bibitem{FT} W. Feit and J.~G. Thompson, Groups which have a faithful representation of degree less than $(p-1)/2$, Pacific J. Math. {\bf 11} (1961), 1257--1262.

\bibitem{GMRS} D. Gluck, K. Magaard, U. Riese and P. Schmid,  The solution of the $k(GV)$-problem, J. Algebra {\bf 279} (2004), no.~2, 694--719. 


\bibitem{G06} R.~M. Guralnick\ and\ G.~R. Robinson, On the commuting probability in finite groups, J. Algebra {\bf 300} (2006), no.~2, 509--528. 

\bibitem{G73} W.~H. Gustafson, What is the probability that two group elements commute?, Amer. Math. Monthly {\bf 80} (1973), 1031--1034. 

\bibitem{Hu67} B. Huppert, {\it Finite Groups I}, translated from the 1967 German edition by C.~A. Schroeder, Grundlehren der mathematischen Wissenschaften, 364, Springer, Cham, 2025.

\bibitem{I94} I.~M. Isaacs, {\it Character theory of finite groups}, Dover, New York, 1994.

\bibitem{I08} I.~M. Isaacs, {\it Finite group theory}, Graduate Studies in Mathematics, 92, Amer. Math. Soc., Providence, RI, 2008.

\bibitem{I18} I.~M. Isaacs, {\it Characters of solvable groups}, Graduate Studies in Mathematics, 189, Amer. Math. Soc., Providence, RI, 2018.

\bibitem{J69} K.~S. Joseph, {\it Commutativity in non-abelian groups}, Thesis (Ph.D.) -- University of California, Los Angeles, 1969, 66pp.

\bibitem{J77} K.~S. Joseph, Research Problems: Several Conjectures on Commutativity in Algebraic Structures, Amer. Math. Monthly {\bf 84} (1977), no.~7, 550--551.

\bibitem{Kazarin} L.~S. Kazarin,  Burnside's $p^\alpha$-Lemma, {Math. Notes} {\bf 48} (1990), no.~1-2, 749--751 (1991); translated from {Mat. Zametki} {\bf 48} (1990), no. 2, 45--48, 158. 

\bibitem{Le87} P.~Lescot, Sur certains grupes finis, Recv. Math. Sp{\'e}ciales {\bf 8} (1987), 267--277.

\bibitem{LiSh04} M.~W. Liebeck and A. Shalev, Fuchsian groups, coverings of Riemann surfaces, subgroup growth, random quotients and random walks, J. Algebra {\bf 276} (2004), no.~2, 552--601.

\bibitem{LiSh05} M.~W. Liebeck and A. Shalev, Character degrees and random walks in finite groups of Lie type, Proc. London Math. Soc. (3) {\bf 90} (2005), no.~1, 61--86.

\bibitem{LiSh05ii} M.~W. Liebeck and A. Shalev, Fuchsian groups, finite simple groups and representation varieties, Invent. Math. {\bf 159} (2005), no.~2, 317--367.


\bibitem{Mar23} A. Mariani, S. Pradhan and E. Ercolessi, Hamiltonians and gauge-invariant Hilbert space for lattice Yang-Mills-like theories with finite gauge group, Phys. Rev. D {\bf 107} (2023), no.~11, Paper No. 114513, 15 pp.

\bibitem{K22} R.~de Mello Koch, Y.-H.~He, G.~Kemp and S.~Ramgoolam, Integrality, duality and finiteness in combinatoric topological strings, J. High Energ. Phys. {\bf 2022}, 71 (2022).

\bibitem{Nag} V.~T. Nagrebetski\u{\i}, Finite minimal non-supersolvable groups, in {\it Finite groups (Proc. Gomel Sem., 1973/1974) (Russian)}, 104--108, 229, Izdat. ``Nauka i Tehnika'', Minsk.

\bibitem{N98} G. Navarro~Ortega, {\it Characters and blocks of finite groups}, London Mathematical Society Lecture Note Series, 250, Cambridge Univ. Press, Cambridge, 1998.

\bibitem{NM22} H.~N. Nguyen and A. Mar\'oti, $p$-regular conjugacy classes and $p$-rational irreducible characters, J. Algebra {\bf 607} (2022), 387--425.

\bibitem{PRR24} A. Padellaro, R. Radhakrishnan and S. Ramgoolam, Row-column duality and combinatorial topological strings, J. Phys. A {\bf 57} (2024), no.~6, Paper No. 065202, 68 pp.

\bibitem{RS22} S. Ramgoolam and E. Sharpe, Combinatoric topological string theories and group theory algorithms, J. High Energy Phys. {\bf 2022}, no.~10, Paper No. 147, 56 pp.

\bibitem{S24} C.~A. Schroeder, Finite groups with many $p$-regular conjugacy classes, J. Algebra {\bf 641} (2024), 716--734.

\bibitem{L85} A. Vera~L\'opez and J. Vera~L\'opez, Classification of finite groups according to the number of conjugacy classes, Israel J. Math. {\bf 51} (1985), no.~4, 305--338.

\bibitem{W89} E. Witten, Quantum field theory and the Jones polynomial, in {\it Braid group, knot theory and statistical mechanics}, 239--329, Adv. Ser. Math. Phys., 9, World Sci. Publ., Teaneck, NJ, 1989.

\end{thebibliography}
\end{document}